\theoremstyle{plain}
\newtheorem{thm}{Theorem}
\newtheorem{prop}[thm]{Proposition}
\newtheorem{lemma}[thm]{Lemma}
\newtheorem{cor}[thm]{Corollary}
\theoremstyle{definition}
\newtheorem{defin}[thm]{Definition}
\newtheorem{ex}[thm]{Example}
\newtheorem{rem}[thm]{Remark}
\newcommand{\X}{\operatorname{\mathfrak{X}}}
\newcommand{\N}{\operatorname{\mathbb{N}}}
\newcommand{\D}{\operatorname{\mathbb{D}}}
\newcommand{\Z}{\operatorname{\mathbb{Z}}}
\newcommand{\R}{\operatorname{\mathbb{R}}}
\newcommand{\C}{\operatorname{\mathbb{C}}}
\newcommand{\U}{\operatorname{\mathbb{U}}}
\newcommand{\hi}[1]{\lgroup{#1}\rgroup}
\newcommand{\Ho}[1]{{H}_{#1}}
\newcommand{\HBo}[1]{{H\!B}_{#1}}
\newcommand{\B}[1]{\mathcal{B}_{\alpha}^{#1}}
\newcommand{\supp}{\mathrm{supp}\,}
\newcommand{\Lda}{L^2_\alpha}
\newcommand{\de}{{\rm d}}
\title{Horocyclic harmonic Bergman spaces on homogeneous trees}
\author{Filippo De Mari\thanks{Dipartimento di Matematica, Dipartimento di Eccellenza 2023-2027, and MaLGa center, Università di Genova, Via Dodecaneso 35, Genova, Italy, email: {demari@dima.unige.it}, {rizzo@dima.unige.it}} \and Matteo Monti\thanks{Dipartimento di Scienze Matematiche “Giuseppe Luigi Lagrange”, Politecnico di Torino, Corso Duca degli Abruzzi 24, 10129, Torino, Italy, email: \mbox{matteo.monti@polito.it}} \and Elena Rizzo\footnotemark[1]}
\begin{document}

\maketitle

\begin{abstract}
The main focus of this contribution is on the harmonic Bergman spaces $\B{p}$ on the $q$-homogeneous tree $\X_q$ endowed with a family of measures $\sigma_\alpha$  that  are constant on the horocycles tangent to a fixed boundary point and turn out to be doubling with respect to the corresponding horocyclic Gromov distance. A central role is played by the reproducing kernel Hilbert space $\B{2}$ for which we find a natural orthonormal basis and  formulae for the kernel. We also consider the atomic Hardy space and the bounded mean oscillation space. Appealing to an adaptation of Calder\'on-Zygmund theory and to standard boundedness results for integral operators on $L^p_\alpha$ spaces with H\"ormander-type kernels, we determine the boundedness properties of the Bergman projection. This work was inspired by \cite{ccps, dmmv}.
\end{abstract}

\section*{Introduction}
\let\thefootnote\relax\footnotetext{\textbf{Keywords}: Bergman spaces, homogeneous trees, horocycles, Bergman projectors, Integral operators.}
\let\thefootnote\relax\footnotetext{\textbf{Mathematics Subject Classification} (2010): 05C05; 46E22; 43A85.}
	
In the literature, homogeneous trees are often studied as discrete counterparts of symmetric spaces of rank one. Among other reasons, this is also motivated by the existence of a natural embedding of the $q$-homogeneous tree $\X_q$ into the hyperbolic disk $\D$,~\cite{cc}. In the last years, this analogy has been exploited in both directions in order to spread concepts and problems from one setting to the other and vice versa. In this perspective, holomorphic Bergman spaces and projections are extensively studied in the continuous setting (see, for example,~\cite{fr,s2,z}) and variants thereof have been introduced on homogeneous trees only in recent developments. Since a definition of holomorphic function on discrete structures is not clearly stated in the literature, in this setting the holomorphicity has been replaced by harmonicity.
In~\cite{ccps,ccps2} the authors define the harmonic Bergman spaces on $\X_q$ as the families of harmonic functions that lie in some $L^p$ space for a so-called reference measure. A reference measure is a finite positive measure which is radial with respect to a fixed vertex and decreasing with respect to the distance from it. The radiality of such measures mimics the classical Bergman measures considered on the hyperbolic disk, that are given by $(1-|w|^2)^{\alpha-2}\de w$, $\alpha>1$.

It is well known that $\D$ can be realized in a different way as the upper half plane $\U$, where the classical Bergman measures are ${\rm Im} (z)^{\alpha-2}\de z$, $\alpha>1$. It is clear that in this version the measures are no longer radial, while they are constant on the horizontal lines, namely the horocycles tangent to the point at infinity. Therefore, in analogy with \cite{ccps, dmmv}, we define a family of measures on $\X_q$ by
\begin{equation}\label{misintro}
    \sigma_\alpha(x)=q^{-\alpha\hi{x}_\omega}, \quad \alpha>1,
\end{equation}
where $\hi{x}_\omega\in\Z$ is the horocyclic index of the vertex $x$ with respect to the fixed boundary point $\omega$. All measures in this family are obviously constant on horocycles tangent to $\omega$ and should be interpreted as a discrete counterpart of the horocyclic Bergman measures on $\U$. Throughout the paper we do not change the geometry of $\X_q$ but we adopt a different perspective with respect to \cite{ccps, dmmv}, by considering a boundary point instead of a vertex as reference point. Indeed, in \cite{ccps, dmmv} the fixed vertex plays the same role as the origin in $\D$, while here the boundary point $\omega$ will take the place of the point at infinity of $\U$.

On $\X_q$, as well as on $\D$ and $\U$, the Bergman space $\mathcal B_\alpha^2$ is a reproducing kernel Hilbert space, so that there exist a kernel $\mathcal K_\alpha$ and a Bergman projection $P_\alpha\colon L^2_\alpha\to\mathcal B_\alpha^2$ given by integration against $\mathcal K_\alpha$. The boundedness of $P_\alpha$ on the main function spaces of $\D$ and $\U$ is completely characterized, see~\cite{z} and~\cite{b}, respectively. In fact, it is well known that in these contexts, $P_\alpha$ extends to a bounded operator on $L^p_\alpha$ if and only if $1<p<\infty$. Furthermore, $P_\alpha$ is of weak type $(1,1)$ and extends to a bounded operator from the atomic Hardy space $H^1_\alpha$ to $L^1_\alpha$ and from $L^\infty_\alpha$ to the bounded mean oscillation space $BMO_\alpha$, for suitable definitions of $H^1_\alpha$ and $BMO_\alpha$. In \cite{dmmv} it is shown that analogous boundedness results hold true for a subclass of the measures introduced in \cite{ccps}. It is worth noticing that the techniques implemented in the continuous and in the discrete setting are radically distinct. The purpose of this paper is to obtain the same kind of boundedness properties for integral operators with respect to $\sigma_\alpha$. Observe that our measures are substantially different from those studied in \cite{dmmv} not just because they are constant on horocycles instead of being radial, but rather because they are not finite.

The Bergman measures we introduce in this work do not seem to appear elsewhere in the literature. It is worth observing that formula~\eqref{misintro} for $\alpha=1$ provides a measure already considered in~\cite{atv,lstv}, which is the so-called canonical flow measure. In particular, in~\cite{lstv} the same theory is developed for the more general class of flow measures. All such measures are not doubling, but only locally doubling with respect to the usual graph distance. 
In the context of homogeneous trees, the boundedness of singular integrals associated with the combinatorial Laplacian and the counting measure has been investigated in~\cite{cms}, while various Hardy and $BMO$ spaces are studied in~\cite{atv,atv2,cm}.

A further development of this work could be to characterize the class of measures on homogeneous trees for which these results hold, in the spirit of the well-known papers \cite{b2,bb} for the hyperbolic disk. Moreover, inspired by \cite{hs,lstv}, among others, we would like to apply our work to a broader family of operators, such as suitable spectral multipliers and first order Riesz transforms associated to some Laplacian operators. Finally, another question is whether it is natural to produce an adapted version of this approach in the setting of non homogeneous trees.

The structure of the paper is the following. The first section is devoted to some preliminaries about homogeneous trees and to introduce harmonic functions on them. In Section 2 we start by defining the horocyclic Bergman measures $\sigma_\alpha$ and giving doubling properties of the resulting metric space with respect to two different metrics. Then we present the horocyclic harmonic Bergman spaces $\B{p}$, and we conclude with the definitions of the atomic Hardy space $H^1_\alpha$ and the bounded mean oscillation space $BMO_\alpha$. In Section 3 we provide an orthonormal basis for $\B{2}$. Inspired by this basis, in Section 4 we define a function on $\X_q$ that has a key role in finding explicit formulae for the reproducing kernel of $\B{2}$, that we state in Theorems \ref{rkv} and \ref{rkv2}. Finally, the last section is dedicated to present a Calder\'on-Zygmund decomposition and a suitable H\"ormander condition and to obtain boundedness of integral operators on the $L^p_\alpha$ spaces, as well as limiting results involving $H^1_\alpha$ and $BMO_\alpha$. Furthermore, we focus on the Bergman projection from $L^2_\alpha$ to $\B{2}$, showing that it satisfies all the hypotheses of these results.

Throughout the paper we will assume the convention that a sum vanishes whenever the starting index is larger than the ending one. The symbols $\lesssim$ and $\simeq$ indicate that the left hand side is smaller than and equal to, respectively, the right hand side multiplied by a uniform constant. Moreover, for every $t\in\R$, $\lfloor t \rfloor\in\Z$ denotes the floor function, namely the greatest integer less than or equal to $t$.

\section{Preliminaries}

In this section we will give some basic notions about homogeneous trees and harmonic functions on trees. For more details we refer to \cite{c,cms,cms2,ftn}.

\subsection{Homogeneous trees}
A pair $(\X,\mathfrak{E})$ is a \textit{graph} if $\X$ is a discrete set of elements, which are the \textit{vertices} of the graph, and $\mathfrak{E}$ is a set of paired vertices, called the \textit{edges} of the graph. If two vertices $x,y\in\X$ form an edge $\{x,y\}\in\mathfrak{E}$ we say that they are \textit{neighbours} and we write $x\sim y$. A \textit{path} is a finite sequence of vertices $x_0,\dots,x_n\in\X$ such that $x_i\sim x_{i+1}$ for every $i=0,\dots,n-1$, a \textit{chain} is a path where $x_i\ne x_{i+2}$ for every $i=0,\dots,n-2$ and a \textit{loop} is a chain with $x_0=x_n$. For every $q\in\N$, $q\ge2$, the $q$-homogeneous tree is a connected graph which has no loops and where every vertex has exactly $q+1$ neighbours. We will usually refer to the $q$-homogeneous tree just by the set of its vertices $\X_q$, and whenever $q$ is fixed we write $\X$ instead of $\X_q$. Notice that, for every $x,y\in\X_q$, there always exists a unique chain $[x,y]$ joining them, so that there is a natural distance on the tree, the \textit{edge counting distance} $d:\X_q\times\X_q\to\N$, such that $d(x,y)$ is the number of edges in the chain $[x,y]$.

Now fix $q\in\N$, $q\ge2$, denote by $\mathcal{I}(\X)$ the set of infinite chains in $\X$ and consider the following equivalence relation on $\mathcal{I}(\X)$. Given $(x_i)_{i\in\N}, (y_j)_{j\in\N}$ in $\mathcal{I}(\X)$ we say that they are equivalent if and only if there exists $m\in\Z$ such that $x_i=y_{i+m}$ eventually. The \textit{boundary} $\Omega$ of $\X$ is defined as the quotient space of $\mathcal{I}(\X)$ by this equivalence relation. The idea is that a boundary point can be seen as the limit point of an infinite chain in $\X$. From now on we fix a point $\omega\in\Omega$.

For $x,y\in\X$ we denote by $[x,\omega)$ and $[y,\omega)$ the infinite chains starting from $x$ and $y$, respectively, and belonging to the equivalence class of $\omega$. By the definition of $\Omega$, and in particular of the equivalence relation on $\mathcal{I}(\X)$, there always exists $z\in\X$ such that $[x,\omega)\cap[y,\omega)=[z,\omega)$. We call $z$ the \textit{confluent} of $x$ and $y$ and we denote it by $x\wedge_{\omega}y$. If $d(x,x\wedge_{\omega}y)=d(y,x\wedge_{\omega}y)$, we say that $x$ and $y$ belong to the same \textit{horocycle} with respect to $\omega$, which is an infinite subset of $\X$. The set of horocycles with respect to $\omega$ is indexed on $\Z$ and forms a partition of the tree. From now on, we only consider horocycles tangent to $\omega$. We fix a reference horocycle $\Ho{0}^{\omega}$, it is easy to see that for all $x\in\X$ the map $y\mapsto d(x,x\wedge_{\omega}y)-d(y,x\wedge_{\omega}y)$ is constant on $\Ho{0}^{\omega}$. Thus, the $k$-th horocycle $\Ho{k}^{\omega}$, $k\in\Z$, defined by
\begin{equation*}
    \Ho{k}^{\omega}:=\Bigl\{x\in\X: d(x,x\wedge_{\omega}y)=d(y,x\wedge_{\omega}y)+k \text{ for some }y\in\Ho{0}^{\omega}\Bigr\}
\end{equation*}
is well defined. Roughly speaking, we enumerate the horocycles with negative integers in the direction of $\omega$ and with positive integers in the opposite direction (see Figure~\ref{f1}). The sets of the form
\begin{equation*}
    H\!B_n^{\,\omega}:=\bigcup_{j\le n}\Ho{j}^{\omega}
\end{equation*}
are called \textit{horoballs}. For all $x\in\X$, its \textit{horocyclic index} $\hi{x}_{\omega}$ is the unique integer such that $x\in\Ho{\hi{x}_{\omega}}^{\omega}$, its \textit{predecessor} $p(x)\in\X$ is the only neighbour of $x$ which belongs to the horocycle $\Ho{\hi{x}_{\omega}-1}^{\omega}$ and the set $S(x)\subseteq\X$ of its \textit{successors} consists of the $q$ neighbours of $x$ belonging to $\Ho{\hi{x}_{\omega}+1}^{\omega}$. Observe that the notion of predecessor provides a function $p:\X\to\X$ and for every $n\ge1$ the composition $p^n$ gives the $n$-th predecessor of every vertex, while the map $p^0$ denotes the identity map. The \textit{sector} $U_x^{\omega}$ is the infinite subset of $\X$ defined by
\begin{equation*}
    U_x^{\omega}:=\{y\in\X: [x,\omega)\subseteq[y,\omega)\}
\end{equation*}
and $x$ is the \textit{generator} of $U_x^{\omega}$.
Roughly speaking, $U_x^{\omega}$ is the set containing $x$ and all the vertices lying below $x$, namely all its descendants with respect to $\omega$. We will denote by $\dot U_x^{\omega}$ the set $U_x^{\omega}\setminus \{x\}$. Many of these notions are represented in Figure \ref{f1}.

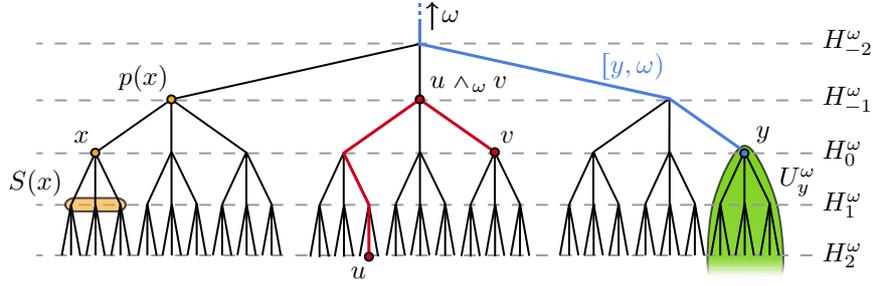
\begin{figure}[h]
    \centering

    \tikzset {_f4ck9b8xy/.code = {\pgfsetadditionalshadetransform{ \pgftransformshift{\pgfpoint{0 bp } { 0 bp }  }  \pgftransformrotate{-90 }  \pgftransformscale{2 }  }}}
\pgfdeclarehorizontalshading{_3rbd5emas}{150bp}{rgb(0bp)=(1,1,1);
rgb(38.57142857142857bp)=(1,1,1);
rgb(56.875bp)=(1,1,1);
rgb(59.91071428571429bp)=(1,1,1);
rgb(100bp)=(1,1,1)}
\tikzset{_ebo4vhymt/.code = {\pgfsetadditionalshadetransform{\pgftransformshift{\pgfpoint{0 bp } { 0 bp }  }  \pgftransformrotate{-90 }  \pgftransformscale{2 } }}}
\pgfdeclarehorizontalshading{_ueaki8ycq} {150bp} {color(0bp)=(transparent!100);
color(38.57142857142857bp)=(transparent!100);
color(56.875bp)=(transparent!90);
color(59.91071428571429bp)=(transparent!0);
color(100bp)=(transparent!0) } 
\pgfdeclarefading{_idbmzd2j8}{\tikz \fill[shading=_ueaki8ycq,_ebo4vhymt] (0,0) rectangle (50bp,50bp); } 
\tikzset{every picture/.style={line width=0.75pt}} 

\begin{tikzpicture}[x=0.75pt,y=0.75pt,scale=0.88,yscale=-1,xscale=-1]
 \draw  [color={rgb, 255:red, 0; green, 0; blue, 0 }  ,draw opacity=0.7 ][fill={rgb, 255:red, 126; green, 211; blue, 33 }  ,fill opacity=1 ] (74,200.5) .. controls (78,200) and (112,202) .. (114,200) .. controls (116,198) and (114.82,172.6) .. (112.32,161.11) .. controls (109.82,149.61) and (100.42,126.6) .. (94.02,126.2) .. controls (87.62,125.8) and (78.02,149.8) .. (75.3,161.11) .. controls (72.59,172.41) and (70,201) .. (74,200.5) -- cycle ;
\draw  [draw opacity=0][shading=_3rbd5emas,_f4ck9b8xy,path fading= _idbmzd2j8 ,fading transform={xshift=2}] (66.88,124.21) -- (118.88,124.21) -- (118.88,207.5) -- (66.88,207.5) -- cycle ;
	\draw [color={rgb, 255:red, 155; green, 155; blue, 155 }  ,draw opacity=1 ] [dash pattern={on 4.5pt off 4.5pt}]  (55.5,160)  -- (495.5,160) ;
	\draw [color={rgb, 255:red, 155; green, 155; blue, 155 }  ,draw opacity=1 ] [dash pattern={on 4.5pt off 4.5pt}]  (55.5,188.83) -- (495.5,188.83) ;
	\draw [color={rgb, 255:red, 155; green, 155; blue, 155 }  ,draw opacity=1 ] [dash pattern={on 4.5pt off 4.5pt}]  (55.5,130.5)  -- (495.5,130.5) ;
	\draw [color={rgb, 255:red, 155; green, 155; blue, 155 }  ,draw opacity=1 ] [dash pattern={on 4.5pt off 4.5pt}]  (55.5,67.75)  -- (495.5,67.75) ;
	\draw [color={rgb, 255:red, 155; green, 155; blue, 155 }  ,draw opacity=1 ] [dash pattern={on 4.5pt off 4.5pt}]  (55.5,100.25) -- (495.5,100.25) ;



	\draw  [color=black, opacity=0.7][fill={rgb, 255:red, 245; green, 166; blue, 35 }  ,fill opacity=0.6 ] (450.32,156.08) -- (473.61,156.08) .. controls (476.63,156.08) and (479.08,157.79) .. (479.08,159.9) .. controls (479.08,162.01) and (476.63,163.73) .. (473.61,163.73) -- (450.32,163.73) .. controls (447.3,163.73) and (444.84,162.01) .. (444.84,159.9) .. controls (444.84,157.79) and (447.3,156.08) .. (450.32,156.08) -- cycle ;
	\draw    (312.19,189.4) -- (306.95,159.9) ;
	\draw    (301.72,189.4) -- (306.95,159.9) ;
	
	\draw    (418.97,99.7) -- (277.92,68) ;
	\draw    (179.69,129.8) -- (136.55,99.3) ;
	\draw    (179.69,159.55) -- (179.69,129.8) ;
	\draw    (193.4,159.7) -- (179.69,129.8) ;
	\draw    (165.26,159.75) -- (179.69,129.8) ;
	\draw    (193.4,188.83) -- (193.4,159.7) ;
	\draw    (198.64,188.83) -- (193.4,159.7) ;
	\draw    (188.17,188.83) -- (193.4,159.7) ;
	\draw    (179.54,188.83) -- (179.54,159.5) ;
	\draw    (184.78,188.83) -- (179.54,159.5) ;
	\draw    (174.31,188.83) -- (179.54,159.5) ;
	\draw    (165.4,188.83) -- (165.4,159.5) ;
	\draw    (170.63,188.83) -- (165.4,159.5) ;
	\draw    (160.17,188.83) -- (165.4,159.5) ;
	\draw    (136.45,159.26) -- (136.45,129.51) ;
	\draw    (150.17,159.41) -- (136.45,129.51) ;
	\draw    (122.02,159.46) -- (136.45,129.51) ;
	\draw    (150.17,188.83) -- (150.17,159.41) ;
	\draw    (155.4,188.83) -- (150.17,159.41) ;
	\draw    (144.93,188.83) -- (150.17,159.41) ;
	\draw    (136.31,188.83) -- (136.31,159.21) ;
	\draw    (141.54,188.83) -- (136.31,159.21) ;
	\draw    (131.07,188.83) -- (136.31,159.21) ;
	\draw    (122.16,188.83) -- (122.16,159.21) ;
	\draw    (127.4,188.83) -- (122.16,159.21) ;
	\draw    (116.93,188.83) -- (122.16,159.21) ;
	\draw    (94.02,159.26) -- (94.02,129.51) ;
	\draw    (107.74,159.41) -- (94.02,129.51) ;
	\draw    (79.59,159.46) -- (94.02,129.51) ;
	\draw    (107.74,188.83) -- (107.74,159.41) ;
	\draw    (112.97,188.83) -- (107.74,159.41) ;
	\draw    (102.5,188.83) -- (107.74,159.41) ;
	\draw    (93.88,188.83) -- (93.88,159.21) ;
	\draw    (99.11,188.83) -- (93.88,159.21) ;
	\draw    (88.64,188.83) -- (93.88,159.21) ;
	\draw    (79.73,188.83) -- (79.73,159.21) ;
	\draw    (84.97,188.83) -- (79.73,159.21) ;
	\draw    (74.5,188.83) -- (79.73,159.21) ;
	\draw    (418.97,129.63) -- (418.97,99.7) ;
	\draw    (376.3,130.2) -- (418.97,99.7) ;
	\draw    (462.11,130.2) -- (418.97,99.7) ;
	\draw    (462.11,159.95) -- (462.11,130.2) ;
	\draw    (475.82,160.1) -- (462.11,130.2) ;
	\draw    (447.68,160.15) -- (462.11,130.2) ;
	\draw    (475.82,188.83) -- (475.82,160.1) ;
	\draw    (481.06,188.83) -- (475.82,160.1) ;
	\draw    (470.59,188.83) -- (475.82,160.1) ;
	\draw    (461.96,188.83) -- (461.96,159.9) ;
	\draw    (467.2,188.83) -- (461.96,159.9) ;
	\draw    (456.73,188.83) -- (461.96,159.9) ;
	\draw    (447.82,188.83) -- (447.82,159.9) ;
	\draw    (453.05,188.83) -- (447.82,159.9) ;
	\draw    (442.59,188.83) -- (447.82,159.9) ;
	\draw    (418.87,159.38) -- (418.87,129.63) ;
	\draw    (432.59,159.53) -- (418.87,129.63) ;
	\draw    (404.44,159.58) -- (418.87,129.63) ;
	\draw    (432.59,188.83) -- (432.59,159.53) ;
	\draw    (437.82,188.83) -- (432.59,159.53) ;
	\draw    (427.35,188.83) -- (432.59,159.53) ;
	\draw    (418.73,188.83) -- (418.73,159.33) ;
	\draw    (423.96,188.83) -- (418.73,159.33) ;
	\draw    (413.49,188.83) -- (418.73,159.33) ;
	\draw    (404.58,188.83) -- (404.58,159.33) ;
	\draw    (409.82,188.83) -- (404.58,159.33) ;
	\draw    (399.35,188.83) -- (404.58,159.33) ;
	\draw    (376.44,159.66) -- (376.44,129.91) ;
	\draw    (390.16,159.81) -- (376.44,129.91) ;
	\draw    (362.01,159.86) -- (376.44,129.91) ;
	\draw    (390.16,188.83) -- (390.16,159.81) ;
	\draw    (395.39,188.83) -- (390.16,159.81) ;
	\draw    (384.92,188.83) -- (390.16,159.81) ;
	\draw    (376.3,188.83) -- (376.3,159.61) ;
	\draw    (381.53,188.83) -- (376.3,159.61) ;
	\draw    (371.06,188.83) -- (376.3,159.61) ;
	\draw    (362.15,188.83) -- (362.15,159.61) ;
	\draw    (367.39,188.83) -- (362.15,159.61) ;
	\draw    (356.92,188.83) -- (362.15,159.61) ;
	\draw    (278.1,129.63) -- (278.1,99.7) ;
	\draw [color={rgb, 255:red, 208; green, 2; blue, 27 }  ,draw opacity=1,line width=0.4 mm ]   (234.97,130.2) -- (278.1,99.7) ;
	\draw [color={rgb, 255:red, 208; green, 2; blue, 27 }  ,draw opacity=1,line width=0.4 mm ]   (321.44,130.5) -- (278.1,99.7) ;
	\draw    (321.24,159.95) -- (321.24,130.2) ;
	\draw    (334.96,160.1) -- (321.24,130.2) ;
	\draw [color={rgb, 255:red, 208; green, 2; blue, 27 }  ,draw opacity=1,line width=0.4 mm ]   (306.81,160.15) -- (321.44,130) ;
	\draw    (334.96,189.85) -- (334.96,160.1) ;
	\draw    (340.19,189.6) -- (334.96,160.1) ;
	\draw    (329.72,189.6) -- (334.96,160.1) ;
	\draw    (321.1,189.65) -- (321.1,159.9) ;
	\draw    (326.33,189.4) -- (321.1,159.9) ;
	\draw    (315.86,189.4) -- (321.1,159.9) ;
	\draw [color={rgb, 255:red, 208; green, 2; blue, 27}  ,draw opacity=1,line width=0.4 mm ]   (306.95,189.65) -- (306.95,159.9) ;
	\draw    (278,159.53) -- (278,129.63) ;
	\draw    (291.72,159.53) -- (278,129.63) ;
	\draw    (263.57,159.58) -- (278,129.63) ;
	\draw    (291.72,189.28) -- (291.72,159.53) ;
	\draw    (296.95,189.03) -- (291.72,159.53) ;
	\draw    (286.49,189.03) -- (291.72,159.53) ;
	\draw    (277.86,189.08) -- (277.86,159.33) ;
	\draw    (283.09,188.83) -- (277.86,159.33) ;
	\draw    (272.63,188.83) -- (277.86,159.33) ;
	\draw    (263.72,189.08) -- (263.72,159.33) ;
	\draw    (268.95,188.83) -- (263.72,159.33) ;
	\draw    (258.48,188.83) -- (263.72,159.33) ;
	\draw    (235.57,159.66) -- (235.57,129.91) ;
	\draw    (249.29,159.81) -- (235.57,129.91) ;
	\draw    (221.15,159.86) -- (235.57,129.91) ;
	\draw    (249.29,189.56) -- (249.29,159.81) ;
	\draw    (254.52,189.31) -- (249.29,159.81) ;
	\draw    (244.06,189.31) -- (249.29,159.81) ;
	\draw    (235.43,189.36) -- (235.43,159.61) ;
	\draw    (240.66,189.11) -- (235.43,159.61) ;
	\draw    (230.2,189.11) -- (235.43,159.61) ;
	\draw    (221.29,189.36) -- (221.29,159.61) ;
	\draw    (226.52,189.11) -- (221.29,159.61) ;
	\draw    (216.05,189.11) -- (221.29,159.61) ;
	
	\draw    (136.65,131.05) -- (136.55,99.3) ;

	\draw    (278.1,99.7) -- (278.1,68) ;

	\draw [color={rgb, 255:red, 74; green, 125; blue, 226 }  ,draw opacity=1,line width=0.4 mm ]   (136.35,99.3) -- (278.1,68) ;
	\draw [color={rgb, 255:red, 74; green, 125; blue, 226 }  ,draw opacity=1,line width=0.4 mm ]   (94.02,129.51) -- (136.55,99.3) ;
	\draw [color={rgb, 255:red, 74; green, 125; blue, 226 }  ,draw opacity=1,line width=0.4 mm ] [dash pattern={on 1.5pt off 1.5pt on 1.5pt off 1.5pt}]  (278.1,56) -- (278.1,42) ;
	\draw [color={rgb, 255:red, 74; green, 125; blue, 226 }  ,draw opacity=1,line width=0.4 mm ]   (278.1,68.8) -- (278.1,56) ;
	\draw  [color=black  ,draw opacity=1 ][fill={rgb, 255:red, 208; green, 2; blue, 27 }  ,fill opacity=1 ] (233.26,129.91) .. controls (233.26,128.64) and (234.29,127.6) .. (235.57,127.6) .. controls (236.85,127.6) and (237.89,128.64) .. (237.89,129.91) .. controls (237.89,131.19) and (236.85,132.23) .. (235.57,132.23) .. controls (234.29,132.23) and (233.26,131.19) .. (233.26,129.91) -- cycle ;
	\draw  [color=black  ,draw opacity=1 ][fill={rgb, 255:red, 208; green, 2; blue, 27 }  ,fill opacity=1 ] (304.64,189.65) .. controls (304.64,188.37) and (305.68,187.34) .. (306.95,187.34) .. controls (308.23,187.34) and (309.27,188.37) .. (309.27,189.65) .. controls (309.27,190.93) and (308.23,191.96) .. (306.95,191.96) .. controls (305.68,191.96) and (304.64,190.93) .. (304.64,189.65) -- cycle ;
	\draw  [color=black  ,draw opacity=1 ][fill={rgb, 255:red, 208; green, 2; blue, 27 }  ,fill opacity=1 ] (275.79,99.7) .. controls (275.79,98.42) and (276.82,97.39) .. (278.1,97.39) .. controls (279.38,97.39) and (280.42,98.42) .. (280.42,99.7) .. controls (280.42,100.98) and (279.38,102.01) .. (278.1,102.01) .. controls (276.82,102.01) and (275.79,100.98) .. (275.79,99.7) -- cycle ;
	\draw  [color=black  ,draw opacity=1 ][fill={rgb, 255:red, 245; green, 166; blue, 35 }  ,fill opacity=1 ] (459.79,130.2) .. controls (459.79,128.92) and (460.83,127.89) .. (462.11,127.89) .. controls (463.38,127.89) and (464.42,128.92) .. (464.42,130.2) .. controls (464.42,131.48) and (463.38,132.51) .. (462.11,132.51) .. controls (460.83,132.51) and (459.79,131.48) .. (459.79,130.2) -- cycle ;
	\draw  [color={black}  ,draw opacity=1 ][fill={rgb, 255:red, 245; green, 166; blue, 35 }  ,fill opacity=1 ] (416.65,99.7) .. controls (416.65,98.42) and (417.69,97.39) .. (418.97,97.39) .. controls (420.25,97.39) and (421.28,98.42) .. (421.28,99.7) .. controls (421.28,100.98) and (420.25,102.01) .. (418.97,102.01) .. controls (417.69,102.01) and (416.65,100.98) .. (416.65,99.7) -- cycle ;
	\draw  [color=black  ,draw opacity=1 ][fill={rgb, 255:red, 74; green, 125; blue, 226 }  ,fill opacity=1 ] (91.7,130.51) .. controls (91.7,129.24) and (92.74,128.2) .. (94.02,128.2) .. controls (95.3,128.2) and (96.33,129.24) .. (96.33,130.51) .. controls (96.33,131.79) and (95.3,132.83) .. (94.02,132.83) .. controls (92.74,132.83) and (91.7,131.79) .. (91.7,130.51) -- cycle ;
	
	\node at (415,101) [anchor=south east] {$p(x)$};
	\node at (460,130) [anchor=south east] {$x$};
	\node at (475,160.5) [anchor=south east] {$S(x)$};
	\node at (93,130) [anchor=south west] {$y$};
	\node at (80,160.5) [anchor=south west] {$U^\omega_y$};
	\node at (238,130) [anchor=south west] {$v$};
	\node at (278,101) [anchor=south west] {$u\wedge_\omega v$};
	\node at (303,190) [anchor=north east] {$u$};
	\node at (55.5,188.83) [anchor=west] {$H^\omega_2$};
	\node at (55.5,160) [anchor=west] {$H^\omega_1$};
	\node at (55.5,130.5) [anchor=west] {$H^\omega_0$};
	\node at (55.5,100.25) [anchor=west] {$H^\omega_{-1}$};
	\node at (55.5,67.75) [anchor=west] {$H^\omega_{-2}$};
    \node at (181,81) [color={rgb, 255:red, 74; green, 125; blue, 226 },anchor=west] {$[y,\omega)$};
 
	\draw[->] (271.1,60) -- (271.1,44); 
	\node at (271.1,52) [anchor=west] {$\omega$};
	
\end{tikzpicture}

   \begin{minipage}{12cm}
    \caption{\small This figure represents a portion of the 3-homogeneous tree. On the left, the neighbours of $x$ are split in the predecessor and the successors. In the middle, the confluent of two vertices. On the right, the green area is the first portion of the sector $U_y^\omega$ of $y$ and the blue path is $[y,\omega)$. Finally, the figure is foliated by (a portion of) some of the horocycles tangent to~$\omega$ \label{f1}}
\end{minipage}
\end{figure}

Finally, the notion of confluent allows us to define another metric on $\X$, the \textit{horocyclic Gromov distance} $\rho_{\omega}:\X\times\X\to\R$, defined by
\begin{equation*}
    \rho_{\omega}(x,y)=
    \begin{cases}
        1 &\text{ if } x=y, \\
        e^{-\hi{x\wedge_{\omega} y}_\omega} &\text{ if } x\ne y,
    \end{cases}
\end{equation*}
see~\cite{g} and~\cite{arsw} for more details about Gromov metric in general and in discrete settings, respectively.

From now on $\omega\in\Omega$ will be fixed and we will omit it from all the notations, namely we will write $x\wedge y$, $\Ho{k}$, $\HBo{n}$, $\hi{x}$, $U_x$, $\dot U_x$, $\rho$.

\subsection{Harmonic functions on $\X$}

\begin{defin}
    Let $f$ be a complex valued function on $\X$. The \textit{combinatorial Laplacian} of $f$ is defined as
    \begin{equation}\label{cl}
        \Delta f(x):=\frac{1}{q+1}\sum_{y\sim x} f(y)-f(x), \quad x\in\X.
    \end{equation}
    The function $f$ is said to be \textit{harmonic in $x$} if $\Delta f(x)=0$, namely if it satisfies the mean property
    \begin{equation*}
        f(x)=\frac{1}{q+1}\sum_{y\sim x} f(y).
    \end{equation*}
    We will say that $f$ is \textit{harmonic on $Y$}$\subseteq\X$ if it is harmonic in $x$ for every $x\in Y$ and that it is \textit{harmonic} if it is harmonic on the whole of $\X$.
\end{defin}
Notice that the notion of harmonicity of a function in a point or on a proper subset of $\X$ involves its values on a larger subset of $\X$. Furthermore, it is clear that the value of a harmonic function at a point $x$ affects the values in the neighbours of $x$, which affect the values in their neighbours and so on, implying a certain stiffness of the function. Indeed, the next result, which is an analogue of Lemma 4.1 in \cite{ccps} and Proposition 2 in \cite{dmmv}, shows that the harmonicity of a function on a sector implies that certain averages (see \eqref{mos}) of the function depend on the values at the generator and at its predecessor.

\begin{lemma}\label{p1}
Let $y\in\X$. If $f:\X\rightarrow\mathbb{C}$ is harmonic on $U_y$, then for all $n\in\N$
\begin{equation}\label{hc}
    \sum_{\substack{ x\in U_y\cap\Ho{\hi{y}+n}}} f(x)
    = \Bigl(\sum_{j=0}^{n} q^j\Bigr)f(y)
    -\Bigl(\sum_{j=0}^{n-1} q^j\Bigr)f(p(y)).
\end{equation}
Conversely, if $f$ is constant on $U_y\cap \Ho{\hi{y}+k}$ and satisfies \eqref{hc} for all $k\in\N$, then $f$ is harmonic on $U_y$.
\begin{proof}
Let $A_n:=U_y\cap\Ho{\hi{y}+n}$ for every $n\in\N$. It is clear that $A_n$ is the set of points in $U_y$ which have distance $n$ from $y$ and that $\# A_n=q^n$. We set $a_{-1}=f(p(y))$ and for every $n\in\N$,
\begin{equation}\label{mos}
    a_{n}
    =\frac{1}{q^n}\sum_{x\in A_n} f(x).
\end{equation}
Since, for every $n\ge1$, $f$ is harmonic on $A_n\subseteq U_y$ we have that
\begin{align*}
    q^n a_n
    &=\sum_{x\in A_n} f(x)=\sum_{x\in A_n} \frac{1}{q+1} \Bigl(f(p(x))+\sum_{z\in S(x)} f(z)\Bigr) \\
    &= \frac{1}{q+1} \Bigl( \sum_{x\in A_n} f(p(x)) +\sum_{x\in A_n} \sum_{z\in S(x)} f(z)\Bigr) \\
    &= \frac{1}{q+1} \Bigl( q \sum_{x\in A_{n-1}} f(x) +\sum_{z\in A_{n+1}} f(z)\Bigr) \\
    &= \frac{1}{q+1} (qq^{n-1} a_{n-1}+q^{n+1}a_{n+1}).
\end{align*}
Notice that $a_0=f(y)$ and the harmonicity of $f$ in $y$ reads
\begin{equation*}
    a_0=\frac{1}{q+1}(a_{-1}+q a_1).
\end{equation*}
Thus we obtained that for all $n\in\N$,
\begin{equation}\label{a1}
    q^{n+1}a_{n+1}=(q+1)q^na_n-q^na_{n-1}.
\end{equation}
Our claim is that for all $m\in\N$
\begin{equation}\label{a2}
    q^ma_m
    =\Bigl(\sum_{j=0}^m q^j\Bigr)a_0
    -\Bigl(\sum_{j=0}^{m-1}q^j\Bigr)a_{-1}.
\end{equation}
We proceed by induction on $m$. The case $m=0$ is trivial and the case $m=1$ follows from \eqref{a1} with $n=0$. Now assume $m\ge1$ and that \eqref{a2} holds for every $i\le m$, then by~\eqref{a1}
\begin{align*}
    &q^{m+1}a_{m+1}
    =(q+1)q^ma_m-qq^{m-1}a_{m-1}\\
    &=(q+1)\Bigl(\sum_{j=0}^m q^j\Bigr)a_0-(q+1)\Bigl(\sum_{j=0}^{m-1}q^j\Bigr)a_{-1}
    -q\Bigl(\sum_{j=0}^{m-1} q^j\Bigr)a_0+ q\Bigl(\sum_{j=0}^{m-2}q^j\Bigr)a_{-1} \\
    &= a_0\Bigl(\sum_{j=0}^m q^{j+1}+\sum_{j=0}^m q^j-\sum_{j=0}^{m-1} q^{j+1}\Bigr)
    -a_{-1}\Bigl(\sum_{j=0}^{m-1} q^{j+1}+\sum_{j=0}^{m-1} q^j-\sum_{j=0}^{m-2} q^{j+1}\Bigr)\\
    &= a_0\Bigl(\sum_{j=0}^{m+1} q^j\Bigr)-a_{-1}\Bigl(\sum_{j=0}^m q^j\Bigr).
\end{align*}
Now if $f:\X\rightarrow\C$ satisfies \eqref{hc} and is constant on $A_k$ for all $k\in\N$, say $f(x)=f_k$ for all $x\in A_k$, then \eqref{hc} reads
\begin{equation*}
    q^kf(x)=q^kf_k
    = \Bigl(\sum_{j=0}^{k} q^j\Bigr)f(y)
    -\Bigl(\sum_{j=0}^{k-1} q^j\Bigr)f(p(y))
\end{equation*}
and proceeding backwards in the first part of the proof gives that $f$ is harmonic on $U_y$.
\end{proof}
\end{lemma}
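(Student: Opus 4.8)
The plan is to reduce the identity to a linear three-term recurrence for the averages of $f$ over the horocyclic slices of the sector. First I would set $A_n := U_y\cap \Ho{\hi{y}+n}$ and record the combinatorics of these slices: $A_n$ is exactly the set of vertices of $U_y$ at distance $n$ from $y$, so $\#A_n=q^n$; the predecessor map carries $A_{n+1}$ onto $A_n$ in a $q$-to-one way; and $\bigcup_{x\in A_n}S(x)=A_{n+1}$ with the union disjoint. Then I define $a_n:=q^{-n}\sum_{x\in A_n}f(x)$ for $n\ge 0$ and $a_{-1}:=f(p(y))$, noting $a_0=f(y)$, so that \eqref{hc} is precisely the assertion $q^m a_m=\bigl(\sum_{j=0}^m q^j\bigr)a_0-\bigl(\sum_{j=0}^{m-1}q^j\bigr)a_{-1}$.

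Next I would extract the recurrence. Summing the mean value property $f(x)=\tfrac{1}{q+1}\bigl(f(p(x))+\sum_{z\in S(x)}f(z)\bigr)$ over $x\in A_n$ for $n\ge1$ and using the counting above, the predecessor terms contribute $q\sum_{x\in A_{n-1}}f(x)=q\cdot q^{n-1}a_{n-1}$ and the successor terms contribute $\sum_{z\in A_{n+1}}f(z)=q^{n+1}a_{n+1}$, giving $q^n a_n=\tfrac{1}{q+1}\bigl(q\,q^{n-1}a_{n-1}+q^{n+1}a_{n+1}\bigr)$, i.e. $q^{n+1}a_{n+1}=(q+1)q^n a_n-q^n a_{n-1}$. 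Harmonicity of $f$ at $y$ itself yields the same relation for $n=0$, since there the predecessor term is $f(p(y))=a_{-1}$ and the successor term is $\sum_{z\in S(y)}f(z)=q a_1$. Thus the recurrence holds for every $n\ge0$.

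Then I would prove the closed form $q^m a_m=\bigl(\sum_{j=0}^m q^j\bigr)a_0-\bigl(\sum_{j=0}^{m-1}q^j\bigr)a_{-1}$ by induction on $m$: the case $m=0$ is trivial and $m=1$ is the recurrence with $n=0$, while the inductive step substitutes the formulas for $a_m$ and $a_{m-1}$ into the recurrence and collects geometric sums, using the elementary identity $\sum_{j=0}^k q^{j+1}+\sum_{j=0}^k q^j-\sum_{j=0}^{k-1}q^{j+1}=\sum_{j=0}^{k+1}q^j$ (and its analogue for the $a_{-1}$ coefficient). Unwinding the definitions of $a_m,a_0,a_{-1}$ gives \eqref{hc}. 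For the converse, if $f$ is constant on each $A_k$ with value $f_k$ and \eqref{hc} holds for all $k$, then \eqref{hc} reads $q^k f_k=\bigl(\sum_{j=0}^k q^j\bigr)f(y)-\bigl(\sum_{j=0}^{k-1}q^j\bigr)f(p(y))$; subtracting the instances at $k=n+1,n,n-1$ (with the convention $f_{-1}=f(p(y))$) recovers $q^{n+1}f_{n+1}=(q+1)q^n f_n-q^n f_{n-1}$ for all $n\ge0$, which is exactly the mean value property at $y$ when $n=0$ and at any vertex of $A_n$ when $n\ge1$, using constancy on $A_{n-1},A_n,A_{n+1}$.

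I expect the only delicate point to be bookkeeping rather than anything conceptual: making sure the predecessor/successor counting over the slices is exactly right, and checking that the recurrence is genuinely valid at the base index $n=0$, where the "predecessor slice" degenerates to the single vertex $p(y)\notin U_y$ encoded by $a_{-1}$. Once those are pinned down, the induction and the backward argument for the converse are routine.
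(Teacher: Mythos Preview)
Your proposal is correct and follows essentially the same route as the paper's proof: the same slice averages $a_n$, the same three-term recurrence $q^{n+1}a_{n+1}=(q+1)q^n a_n-q^n a_{n-1}$ derived by summing the mean value property over $A_n$, and the same induction on the closed form, with the converse handled by reversing the recurrence under the constancy assumption.
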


Now we consider a function $g:\X\rightarrow\C$ which is harmonic on the horoball $\HBo{n}$, $n\in\Z$. There are infinitely many ways to extend $g|_{\HBo{n+1}}$ to a harmonic function on $\X$. Lemma \ref{p1} allows us to define a function $g^H_n:\X\rightarrow\C$ which coincides with $g$ on $\HBo{n+1}$, is harmonic on the whole of $\X$ and is constant on $U_y\cap\Ho{k}$ for all $y\in\Ho{n+1}$ and $k\ge n+1$.
Indeed, notice that for every $x\in\X\setminus\HBo{n+1}$ there exists a unique $y\in\Ho{n+1}$ such that $x\in U_y$, namely $y=p^{\hi{x}-n-1}(x)$. In order to have $g^H_n$ constant on $U_y\cap \Ho{\hi{x}}$, by \eqref{hc} we want $g^H_n$ to satisfy
\begin{equation*}
    q^{\hi{x}-n-1}g^H_n(x)=\Bigl(\sum_{j=0}^{\hi{x}-n-1} q^j\Bigr)g(y)-\Bigl(\sum_{j=0}^{\hi{x}-n-2} q^j\Bigr)g(p(y))
\end{equation*}
which gives
\begin{equation*}
    g^H_n(x)
    =\Bigl(\sum_{j=0}^{\hi{x}-n-1} q^{-j}\Bigr)g(y)
    -\Bigl(\sum_{j=1}^{\hi{x}-n-1} q^{-j}\Bigr)g(p(y)).
\end{equation*}
Hence the function $g^H_n$ is given by
\begin{equation*}
    g^H_n(x)=\begin{dcases} g(x), &\mbox{if } x\in\HBo{n+1}, \\ \Bigl(\sum_{j=0}^{\hi{x}-n-1} q^{-j}\Bigr)g(y)-\Bigl(\sum_{j=1}^{\hi{x}-n-1} q^{-j}\Bigr)g(p(y)), &\mbox{if } x\in\X\setminus\HBo{n+1},
    \end{dcases}
\end{equation*}
where $y=p^{\hi{x}-n-1}(x)$. We will call $g^H_n$ the \textit{harmonic extension} of $g$ below the $n$-th horoball.

\section{Horocyclic Bergman measures and functions spaces}\label{sec.hm}

In this section we are interested in studying horocyclic Bergman measures in order to define Bergman spaces of harmonic functions as well as atomic Hardy spaces and bounded mean oscillation spaces.

\subsection{Horocyclic Bergman measures}

The idea is to consider a measure whose Radon-Nikodym derivative with respect to the counting measure at a given vertex only depends on the horocyclic index. Inspired by the radial measures considered in \cite{ccps}, the so-called \textit{reference measures}, and in particular by those studied in \cite{dmmv}, we introduce the family of measures $\sigma_{\alpha}$ given by
\begin{equation*}
    \sigma_{\alpha}(x)
    =q^{-\alpha\hi{x}}, \quad x\in\X,
\end{equation*}
for every $\alpha>1$, that we shall call \textit{horocyclic Bergman measures}. Observe that our measures are no longer radial with respect to some origin $o$ but are constant on the horocycles. Furthermore, they are not finite on the whole of $\X$, although they have the nice property to be finite on the sectors $U_x$ for all $x\in\X$. Indeed, notice that for every $x\in\X$ the sector $U_x$ can be seen as the disjoint union of the sets $U_x\cap \Ho{\hi{x}+i}$ for $i\in\N$. Since $\#U_x\cap \Ho{\hi{x}+i}=q^i$, and $\sigma_{\alpha}$ is constant on $\Ho{\hi{x}+i}$ we have
\begin{align}\label{ms}
    \notag\sigma_{\alpha}(U_x)
    &=\sum_{y\in U_x} \sigma_{\alpha}(y)
    =\sum_{i=0}^{+\infty} \, \sum_{y\in U_x\cap \Ho{\hi{x}+i}} \sigma_{\alpha}(y) \\
    &=\sum_{i=0}^{+\infty} q^iq^{-\alpha(\hi{x}+i)}
    =q^{-\alpha\hi{x}}\sum_{i=0}^{+\infty} q^{i(1-\alpha)}
    =\frac{q^{-\alpha\hi{x}}}{1-q^{1-\alpha}}.
\end{align}
Remark that \eqref{ms} holds true because we ask $\alpha$ to be greater than $1$. Furthermore, this shows that the measure of a sector is comparable to the measure of its generator with a constant depending only on $\alpha$. In the context of radial measures this is the \textit{optimality} notion introduced in \cite{ccps}.

We introduce the notion of \textit{doubling} space, that will be useful later, and show that $\X$ with the measure $\sigma_{\alpha}$ is not doubling with respect to the counting distance, while it is doubling with respect to the Gromov distance.

\begin{defin}
    A measure metric space $(X,d,\mu)$ is \textit{doubling} if there exists a constant $C>0$ such that
    \begin{equation*}
        \mu(B(x,2r))\le C\mu(B(x,r))
    \end{equation*}
    for all $x\in X$ and $r>0$.
\end{defin}

\begin{prop}
    For every $\alpha>1$, the measure metric space $(\X,d,\sigma_{\alpha})$ is not doubling.
    \begin{proof}
        Fix a vertex $y\in\Ho{0}$. For all $n\in\N$ select $v_n\in U_y\cap\Ho{2n}$. We consider the ball $B_d(v_n,n)$ and notice that $B_d(v_n,n)\subseteq\X\setminus\HBo{n-1}$, thus
        \begin{equation*}
            \max\{\sigma_{\alpha}(x):x\in B_d(v_n,n)\}=q^{-\alpha n}.
        \end{equation*}
        It is easy to observe that $\# S_d(x,k)=(q+1)q^{k-1}$ for every $x\in\X$ and integer $k>0$.
        Therefore, we obtain
        \begin{align*}
            \sigma_{\alpha}(B_d(v_n,n))
            &=\sum_{x\in B_d(v_n,n)}\sigma_{\alpha}(x)
            \le q^{-\alpha n} \,\#B_d(v_n,n)
            =q^{-\alpha n} \sum_{i=0}^{n} \#S_d(v_n,i) \\
            &=q^{-\alpha n} \Bigl(1+\sum_{i=1}^n (q+1)q^{i-1}\Bigr)
            \lesssim q^{n(1-\alpha)}.
        \end{align*}
        It follows that
        \begin{equation*}
        \frac{\sigma_{\alpha}(B_d(v_n,2n))}{\sigma_{\alpha}(B_d(v_n,n))} \gtrsim  \frac{\sigma_{\alpha}(y)}{q^{n(1-\alpha)}}
        = q^{n(\alpha-1)} \longrightarrow +\infty
        \end{equation*}
        so that $(\X,d,\sigma_{\alpha})$ is not doubling for any $\alpha>1$.
    \end{proof}
\end{prop}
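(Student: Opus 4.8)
The plan is to contradict the doubling inequality by exhibiting, for every $n\in\N$, a ball whose ratio $\sigma_\alpha(B_d(\cdot,2n))/\sigma_\alpha(B_d(\cdot,n))$ grows without bound. The mechanism exploits the strong anisotropy of $\sigma_\alpha$: since $\sigma_\alpha(x)=q^{-\alpha\hi{x}}$, the measure is exponentially large on horocycles deep toward $\omega$ (very negative index) and exponentially small on horocycles far from $\omega$ (large positive index). So I would center the balls \emph{far} from $\omega$; then a radius-$n$ ball can only reach horocycles of index $\ge n$, on which $\sigma_\alpha\le q^{-\alpha n}$, while doubling the radius lets the ball climb all the way back up to the reference horocycle $\Ho{0}$, picking up a vertex of unit mass.

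First I would fix $y\in\Ho{0}$ and, for each $n\in\N$, choose $v_n\in U_y\cap\Ho{2n}$ (nonempty, since this intersection has $q^{2n}$ elements). Because the horocyclic index changes by exactly $\pm1$ along each edge of a chain, every $x$ with $d(x,v_n)\le n$ satisfies $\hi{x}\ge 2n-n=n$; hence $B_d(v_n,n)\subseteq\X\setminus\HBo{n-1}$ and $\sigma_\alpha(x)\le q^{-\alpha n}$ on this ball. Using $\#S_d(x,k)=(q+1)q^{k-1}$ for $k\ge1$, a geometric sum gives $\#B_d(v_n,n)\lesssim q^n$, so
\begin{equation*}
    \sigma_\alpha\bigl(B_d(v_n,n)\bigr)\le q^{-\alpha n}\,\#B_d(v_n,n)\lesssim q^{n(1-\alpha)}.
\end{equation*}
For the opposite bound I would note $d(v_n,y)=2n$, so $y\in B_d(v_n,2n)$ and thus $\sigma_\alpha(B_d(v_n,2n))\ge\sigma_\alpha(y)=1$. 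Combining the two estimates,
\begin{equation*}
    \frac{\sigma_\alpha\bigl(B_d(v_n,2n)\bigr)}{\sigma_\alpha\bigl(B_d(v_n,n)\bigr)}\gtrsim q^{n(\alpha-1)}\longrightarrow+\infty,
\end{equation*}
which is incompatible with the doubling inequality for any fixed constant $C$.

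There is no real obstacle here; the only thing that requires care is the calibration of the three scales — center at horocyclic index $2n$, inner radius $n$, outer radius $2n$ — chosen precisely so that $B_d(v_n,n)$ lies entirely in the region where $\sigma_\alpha$ is uniformly $\le q^{-\alpha n}$ while the doubled ball already meets $\Ho{0}$. Everything else reduces to the elementary counting of spheres in $\X$, and the hypothesis $\alpha>1$ is used exactly to make the ratio $q^{n(\alpha-1)}$ diverge (the same hypothesis guarantees, via \eqref{ms}, that $\sigma_\alpha$ is finite on sectors in the first place).
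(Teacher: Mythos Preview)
Your proof is correct and follows essentially the same approach as the paper: same choice of center $v_n\in U_y\cap\Ho{2n}$, same upper bound $\sigma_\alpha(B_d(v_n,n))\lesssim q^{n(1-\alpha)}$ via the sphere count, and same lower bound for the doubled ball coming from $y\in B_d(v_n,2n)$. If anything, you make the last step slightly more explicit than the paper does.
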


\begin{rem}\label{gb}
    For what follows we need to understand the structure of balls in the Gromov metric $\rho$, that is
    \begin{equation*}
        B_{\rho}(x,r)=\{y\in\X:\rho(x,y)\le r\}.
    \end{equation*}
    We show that $B_{\rho}(x,r)$ is either $\{x\}$ or a sector containing $x$. Indeed, for $x,y\in\X$, $x\ne y$ and $r>0$ we have that
    \begin{equation*}
        \rho(x,y)=e^{-\hi{x\wedge y}}\le r
        \iff
        -\log(r)\le\hi{x\wedge y}.
    \end{equation*}
    Notice that $x\wedge y\in[x,\omega)$, so  $\hi{x\wedge y}\le\hi{x}$ and
        \begin{enumerate}[i)]
            \item if $\lfloor -\log(r)\rfloor>\hi{x}$, then $B_{\rho}(x,r)=\{x\}$;
            \item if $\lfloor -\log(r)\rfloor\le\hi{x}$, then
            $$B_{\rho}(x,r)=\{y\in\X:\lfloor -\log(r)\rfloor\le\hi{x\wedge y}\le\hi{x}\}
            =U_{p^{\hi{x}-\lfloor -\log(r)\rfloor}(x)}$$ since $x\wedge y$ must lie in $[x,p^{\hi{x}-\lfloor -\log(r)\rfloor}(x)]$. In particular $B_{\rho}(x,r)=U_x$ when $\lfloor -\log(r)\rfloor=\hi{x}$.
        \end{enumerate}
        In conclusion, we can state that
        \begin{equation*}
            \{B_{\rho}(x,r):r>0\}=\{x\}\cup\{U_y:y\in\X,x\in U_y\}.
        \end{equation*}
\end{rem}

\begin{prop}\label{gmd}
    For every $\alpha>1$, the measure metric space $(\X,\rho,\sigma_{\alpha})$ is doubling with doubling constant
    \begin{equation*}
        k_{\alpha}=\max\Bigl\{q^{\alpha},\frac{1}{1-q^{1-\alpha}}\Bigr\}.
    \end{equation*}
    \begin{proof}
        Let $x\in\X$ and $r>0$. We observe that
        \begin{equation*}
            \lfloor-\log(2r)\rfloor
            =\lfloor -\log(2)-\log(r)\rfloor\in\left\{\lfloor -\log(r)\rfloor, \lfloor -\log(r)\rfloor-1\right\}.
        \end{equation*}
        For this reason, we have that if $B_{\rho}(x,r)=\{x\}$ then $B_{\rho}(x,2r)$ can only be $\{x\}$ or $U_x$ while if $B_{\rho}(x,r)=U_y$ then $B_{\rho}(x,2r)$ can only be $U_y$ or $U_{p(y)}$. Thus it is sufficient to check that
        \begin{equation*}
            \sup_{y\in\X} \frac{\sigma_{\alpha}(U_y)}{\sigma_{\alpha}(y)}<+\infty, \quad
            \sup_{y\in\X} \frac{\sigma_{\alpha}(U_{p(y)})}{\sigma_{\alpha}(U_y)}<+\infty.
        \end{equation*}
        Both facts follow by \eqref{ms}, indeed
        \begin{equation*}
            \frac{\sigma_{\alpha}(U_y)}{\sigma_{\alpha}(\{y\})}
            =\frac{q^{-\alpha\hi{y}}\frac{1}{1-q^{1-\alpha}}}{q^{-\alpha\hi{y}}}
            =\frac{1}{1-q^{1-\alpha}}
        \end{equation*}
        and
        \begin{equation*}
            \frac{\sigma_{\alpha}(U_{p(y)})}{\sigma_{\alpha}(U_y)}
            =\frac{q^{-\alpha(\hi{y}-1)}\frac{1}{1-q^{1-\alpha}}}{q^{-\alpha\hi{y}}\frac{1}{1-q^{1-\alpha}}}
            =q^{\alpha}.
        \end{equation*}
        We conclude that $(\X,\rho,\sigma_{\alpha})$ is doubling for all $\alpha>1$ with doubling constant
        \begin{equation*}
            k_{\alpha}
            =\max\Bigl\{q^{\alpha},\frac{1}{1-q^{1-\alpha}}\Bigr\}.
        \end{equation*}
    \end{proof}
\end{prop}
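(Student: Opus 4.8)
The plan is to exploit the explicit description of $\rho$-balls from Remark~\ref{gb}: every ball $B_\rho(x,r)$ is either the singleton $\{x\}$ or a sector $U_y$ with $x\in U_y$, and which of the two alternatives occurs — and, in the second case, which sector it is — is governed solely by $\lfloor-\log r\rfloor$. So the first step is the elementary observation that, since $-\log 2\in(-1,0)$, the integer $\lfloor-\log(2r)\rfloor=\lfloor-\log 2-\log r\rfloor$ equals either $\lfloor-\log r\rfloor$ or $\lfloor-\log r\rfloor-1$. Combined with Remark~\ref{gb}, this means that passing from radius $r$ to radius $2r$ moves us at most ``one step up'' the tree: if $B_\rho(x,r)=\{x\}$ then $B_\rho(x,2r)$ is $\{x\}$ or $U_x$, and if $B_\rho(x,r)=U_y$ then $B_\rho(x,2r)$ is $U_y$ or $U_{p(y)}$.

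Consequently the doubling inequality reduces to the two uniform bounds
\[
    \sup_{y\in\X}\frac{\sigma_\alpha(U_y)}{\sigma_\alpha(\{y\})}<+\infty
    \quad\text{and}\quad
    \sup_{y\in\X}\frac{\sigma_\alpha(U_{p(y)})}{\sigma_\alpha(U_y)}<+\infty,
\]
the constant $1$ handling the cases in which the ball does not change. The second step is to evaluate these two suprema using the computation of $\sigma_\alpha(U_x)$ performed in~\eqref{ms}: since $\sigma_\alpha(U_x)=q^{-\alpha\hi{x}}/(1-q^{1-\alpha})$ and $\sigma_\alpha(\{x\})=q^{-\alpha\hi{x}}$, the first ratio is identically $1/(1-q^{1-\alpha})$, while using $\hi{p(y)}=\hi{y}-1$ the second ratio is identically $q^\alpha$. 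Taking the larger of these two values, which automatically dominates $1$, yields the claimed doubling constant $k_\alpha=\max\{q^\alpha,\,1/(1-q^{1-\alpha})\}$.

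There is no real obstacle here; the only point requiring a little care is the bookkeeping linking the radius $r$, the quantity $\lfloor-\log r\rfloor$, and the precise sector furnished by Remark~\ref{gb}, so that the ``one step'' claim is justified in every case — including the borderline situation $\lfloor-\log r\rfloor=\hi{x}$, where $B_\rho(x,r)=U_x$ already. Once this is in place, both the finiteness and the exact value of the constant follow immediately from~\eqref{ms}, with the hypothesis $\alpha>1$ entering precisely to ensure convergence of the geometric series appearing there.
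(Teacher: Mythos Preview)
Your proof is correct and follows essentially the same approach as the paper: reduce the doubling question via Remark~\ref{gb} and the floor identity $\lfloor-\log(2r)\rfloor\in\{\lfloor-\log r\rfloor,\lfloor-\log r\rfloor-1\}$ to the two ratios $\sigma_\alpha(U_y)/\sigma_\alpha(\{y\})$ and $\sigma_\alpha(U_{p(y)})/\sigma_\alpha(U_y)$, and evaluate both using~\eqref{ms}. The only difference is cosmetic---you add a remark on the borderline case $\lfloor-\log r\rfloor=\hi{x}$ and on where $\alpha>1$ is used---but the argument is the same.
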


\subsection{Horocyclic harmonic Bergman spaces}

In what follows we study harmonic Bergman spaces with respect to the measures introduced above, in analogy with \cite{ccps,dmmv}.

\begin{defin}
For $p\in[1,+\infty)$ and $\alpha>1$ the \textit{horocyclic harmonic Bergman space} $\B{p}$ is the space of harmonic functions $f:\X\to\C$ such that
\begin{equation*}
    \|f\|_{\B{p}}
    :=\Bigl(\sum_{x\in\X}|f(x)|^p\sigma_{\alpha}(x)\Bigr)^{\frac{1}{p}}
    < +\infty.
\end{equation*}
\end{defin}

From now on we fix $\alpha>1$.

\begin{ex}\label{es}
We explicitly build a function in $\B{p}$. We will use many times this kind of construction in the rest of the work. Fix a vertex $y\in\X$ and let $g:S(y)\to\C$ be such that
\begin{equation*}
    \sum_{x\in S(y)}g(x)=0, \quad g\not\equiv 0.
\end{equation*}
Let $f$ be defined by
\begin{equation*}
    f(x)=
    \begin{dcases}
    0, &\mbox{if } x\in\X\setminus S(y), \\
    g(x), &\mbox{if } x\in S(y).
    \end{dcases}
\end{equation*}
It is clear that $f$ is harmonic on $\HBo{\hi{y}}$, so that we can consider its harmonic extension $f^H_{\hi{y}}$. Since $p^{\hi{x}-\hi{y}}(x)=y$ for every $x\in\dot U_y$ and $f(y)=0$, we have
\begin{equation*}
    f^H_{\hi{y}}(x)=
    \begin{dcases}
    0, &\mbox{if } x\notin \dot U_y, \\
    \Bigl(\sum_{i=0}^{\hi{x}-\hi{y}-1}q^{-i}\Bigr)g(p^{\hi{x}-\hi{y}-1}(x)), &\mbox{if } x\in\dot U_y.
    \end{dcases}
\end{equation*}
Thus $f^H_{\hi{y}}$ is harmonic and is bounded by
\begin{equation*}
    |f^H_{\hi{y}}(x)|\le \max_{z\in S(y)} |g(z)|\, \sum_{i=0}^{+\infty}q^{-i}=
    \max_{z\in S(y)} |g(z)| \, \frac{q}{q-1}<+\infty.
\end{equation*}
Furthermore, since $\supp(f^H_{\hi{y}})\subseteq U_y$ and $\sigma_{\alpha}(U_y)<+\infty$, $f^H_{\hi{y}}\in\B{p}$ for all $p\in[1,+\infty)$ and $\alpha>1$.
\end{ex}

The next result is quite technical and will be a useful tool for many computations. 
\begin{lemma}\label{sp}
Let $n\in\Z$, $y\in\Ho{n}$ and $g:\X\to\C$ be a function harmonic on $\HBo{n}$ and such that $g(x)=0$ for all $x\in\HBo{n+1}$ except on $S(y)$. There exist $b_{n,\alpha}, b_{n,\alpha}'>0$ such that, for every $f\in\B{2}$,
\begin{equation*}
    \langle f,g^H_n\rangle_{\B{2}}
    =\sum_{z\in S(y)}\overline{g(z)}\left(f(z) b_{n,{\alpha}}
    -f(y)b_{n,\alpha}'\right),
\end{equation*}
where in particular
\begin{equation*}
    b_{n,\alpha}
    :=q^{-\alpha n}\sum_{l=0}^{+\infty}q^{-\alpha (l+1)} \Bigl(\sum_{i=0}^{l} q^{-i}\Bigr)\Bigl(\sum_{j=0}^{l} q^j\Bigr),
\end{equation*}
\begin{equation*}
    b_{n,\alpha}'
    :=q^{-\alpha n}\sum_{l=0}^{+\infty}q^{-\alpha (l+1)} \Bigl(\sum_{i=0}^{l} q^{-i}\Bigr)\Bigl(\sum_{j=0}^{l-1} q^j\Bigr).
\end{equation*}
\begin{proof}
    Let $f,n$ and $g$ be as in the statement. It is worth noticing that $g$ has zero mean on $S(y)$ since it is harmonic in $y$ and vanishes on $\HBo{n}$, so that $g^H_n$ belongs to $\B{2}$ by Example \ref{es}. Furthermore, it is clearly supported in $\dot U_y$, hence
    \begin{align*}
        \langle f,g^H_n\rangle_{\B{2}}
        &=\sum_{x\in\dot U_y}f(x)\overline{g_n^H(x)}\sigma_{\alpha}(x)
        =\sum_{k>n} q^{-\alpha k} \sum_{x\in\Ho{k} \cap U_y}f(x)\overline{g_n^H(x)}\\
        &=\sum_{k>n}q^{-\alpha k} \sum_{x\in\Ho{k}\cap U_y} f(x)
        \Bigl[\Bigl(\sum_{i=0}^{k-n-1} q^{-i}\Bigr)\overline{g(z)}
        -\Bigl(\sum_{i=1}^{k-n-1}q^{-i}\Bigr)\overline{g(y)}\Bigr]
    \end{align*}
    where $z=p^{k-n-1}(x)$ for every $x\in U_y\cap\Ho{k}$. Now, since $g(y)=0$, we have
    \begin{align*}
        \langle  f,&g^H_n\rangle_{\mathcal{B}^2_{\alpha}} 
        =\sum_{k>n}q^{-\alpha k} \Bigl(\sum_{i=0}^{k-n-1} q^{-i}\Bigr) \sum_{x\in\Ho{k}\cap U_y} f(x) \overline{g(p^{k-n-1}(x))} \\
        &=\sum_{k>n}q^{-\alpha k} \Bigl(\sum_{i=0}^{k-n-1} q^{-i}\Bigr) \sum_{z\in S(y)}\overline{g(z)}\sum_{x\in\Ho{k}\cap U_z} f(x) \\
        &=\sum_{k>n}q^{-\alpha k} \Bigl(\sum_{i=0}^{k-n-1} q^{-i}\Bigr) \sum_{z\in S(y)}\overline{g(z)}\Bigl(\Bigl(\sum_{j=0}^{k-n-1}q^j\Bigr)f(z)
        -\Bigl(\sum_{j=0}^{k-n-2}q^j\Bigr)f(p(z))\Bigr)\\
        &=\sum_{z\in S(y)}\overline{g(z)}
        \Bigl(f(z)b_{n,\alpha}-f(y)b_{n,\alpha}'\Bigr),
    \end{align*}
    where we applied Lemma \ref{p1} to $f$.
    Also, we have
    \begin{align*}
        b_{n,\alpha}
        :=&\sum_{k>n}q^{-\alpha k} \Bigl(\sum_{i=0}^{k-n-1} q^{-i}\Bigr)\Bigl(\sum_{j=0}^{k-n-1} q^j\Bigr)
        =\sum_{l=0}^{+\infty}q^{-\alpha (l+n+1)} \Bigl(\sum_{i=0}^{l} q^{-i}\Bigr)\Bigl(\sum_{j=0}^{l} q^j\Bigr)\\
        =&q^{-\alpha n}\sum_{l=0}^{+\infty}q^{-\alpha (l+1)} \Bigl(\sum_{i=0}^{l} q^{-i}\Bigr)\Bigl(\sum_{j=0}^{l} q^j\Bigr)
    \end{align*}
    and analogously for $b_{n,\alpha}'$.
\end{proof}
\end{lemma}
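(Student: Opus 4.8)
The plan is to compute $\langle f,g^H_n\rangle_{\B{2}}$ by brute force from the explicit description of the harmonic extension, reducing the triple sum that appears to the two scalar series $b_{n,\alpha}$ and $b_{n,\alpha}'$ by two applications of results already at hand: the formula for $g^H_n$ recorded before Section~\ref{sec.hm}, and Lemma~\ref{p1} applied to the (globally harmonic) function $f$.

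First I would dispose of the preliminaries that make the inner product meaningful. Since $S(y)\subseteq\Ho{n+1}$ is disjoint from $\HBo{n}$, the hypothesis gives $g\equiv0$ on $\HBo{n}$; in particular $g(y)=0$, and harmonicity of $g$ at $y$ then forces $\sum_{z\in S(y)}g(z)=0$, i.e.\ $g$ has zero mean on $S(y)$. By Example~\ref{es} this yields $g^H_n\in\B{2}$ with $\supp g^H_n\subseteq\dot U_y$, so by Cauchy--Schwarz the series $\sum_x f(x)\overline{g^H_n(x)}\sigma_{\alpha}(x)$ converges absolutely, which legitimises every rearrangement below. Writing $\dot U_y=\bigsqcup_{k>n}(U_y\cap\Ho{k})$, using $\sigma_{\alpha}\equiv q^{-\alpha k}$ on $\Ho{k}$, and inserting the formula for $g^H_n$ (whose second term drops out because $g(y)=0$), I obtain
\[
\langle f,g^H_n\rangle_{\B{2}}=\sum_{k>n}q^{-\alpha k}\Bigl(\sum_{i=0}^{k-n-1}q^{-i}\Bigr)\sum_{x\in U_y\cap\Ho{k}}f(x)\,\overline{g\bigl(p^{k-n-1}(x)\bigr)}.
\]

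The key step is to reorganise the inner sum. For $x\in U_y\cap\Ho{k}$ the vertex $z:=p^{k-n-1}(x)$ is the unique ancestor of $x$ lying in $S(y)=U_y\cap\Ho{n+1}$, so $U_y\cap\Ho{k}=\bigsqcup_{z\in S(y)}(U_z\cap\Ho{k})$ and the inner sum becomes $\sum_{z\in S(y)}\overline{g(z)}\sum_{x\in U_z\cap\Ho{k}}f(x)$. Applying Lemma~\ref{p1} to the harmonic $f$ on the sector $U_z$ (here $\hi{z}=n+1$ and $p(z)=y$) replaces $\sum_{x\in U_z\cap\Ho{k}}f(x)$ by $\bigl(\sum_{j=0}^{k-n-1}q^j\bigr)f(z)-\bigl(\sum_{j=0}^{k-n-2}q^j\bigr)f(y)$. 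Substituting this back and moving the finite sum over $z\in S(y)$ past the sum over $k$ gives exactly $\sum_{z\in S(y)}\overline{g(z)}\bigl(f(z)b_{n,\alpha}-f(y)b_{n,\alpha}'\bigr)$, where $b_{n,\alpha}$ and $b_{n,\alpha}'$ are the two $k$-series that remain; the substitution $l=k-n-1\ge0$ then puts them in the stated closed form. It remains to check $b_{n,\alpha},b_{n,\alpha}'\in(0,+\infty)$: the bounds $\sum_{i=0}^{l}q^{-i}\le q/(q-1)$ and $\sum_{j=0}^{l}q^j\le q^{l+1}/(q-1)$ show the $l$-th term is $O\bigl(q^{(l+1)(1-\alpha)}\bigr)$, summable since $\alpha>1$, while positivity is immediate (for $b_{n,\alpha}'$ only the $l=0$ term vanishes, by the empty-sum convention).

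The proof is in essence bookkeeping with geometric series; the one point that requires care is the justification of the interchanges of summation. This rests entirely on the a~priori absolute convergence coming from $f,g^H_n\in\B{2}$, together with the finiteness of $b_{n,\alpha}$ and $b_{n,\alpha}'$ — the latter being precisely what makes the final separation of the $f(z)$- and $f(y)$-coefficients legitimate, since $f(z)$ and $f(y)$ are fixed numbers.
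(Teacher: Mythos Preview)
Your argument is correct and follows exactly the paper's route: use the support and explicit form of $g^H_n$, decompose $\dot U_y$ by horocycles, regroup over $z\in S(y)$, apply Lemma~\ref{p1} to $f$ on each $U_z$, and re-index via $l=k-n-1$. The only difference is that you add the (welcome) justifications of absolute convergence and of $0<b_{n,\alpha},b_{n,\alpha}'<\infty$, which the paper leaves implicit.
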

By the same proof we can also obtain that for every $g\in\B{2}$ that vanishes in $\HBo{n}$ and every $f\in\B{2}$,
\begin{equation*}
    \langle f,g^H_n\rangle_{\B{2}}
    =\sum_{z\in \Ho{n+1}}\overline{g(z)}\left(f(z) b_{n,{\alpha}}
    -f(p(z))b_{n,\alpha}'\right).
\end{equation*}
From now on we will write $b_n$ and $b_n'$ instead of $b_{n,\alpha}$ and $b_{n,\alpha}'$ since $\alpha$ is fixed.

\vspace{0.3 cm}
We conclude this section by showing that, as we could expect, $\B{2}$ is a Hilbert space. In the following sections, we provide an orthonormal basis and a reproducing kernel for $\B{2}$.

\begin{prop}\label{hs}
    The horocyclic Bergman space $\B{2}$ is a Hilbert space.
    \begin{proof}
        We only have to show that $\B{2}$ is closed in $\Lda$. The combinatorial Laplacian $\Delta$ defined in \eqref{cl} maps $\Lda$ into $\Lda$, indeed for $f\in\Lda$
        \begin{align*}
            \|\Delta f\|^2_{2,\alpha}
            &=\sum_{x\in\X}|\Delta f(x)|^2\sigma_{\alpha}(x)\\
            &=\sum_{x\in\X}\Bigl|\frac{1}{q+1}\sum_{y\sim x}f(y)-f(x)\Bigr|^2\sigma_{\alpha}(x)\\
            &\le \sum_{x\in\X} (q+2) \Bigl(\frac{1}{(q+1)^2}\sum_{y\sim x}|f(y)|^2+|f(x)|^2\Bigr)\sigma_{\alpha}(x)\\
            &=(q+2) \Bigl(\frac{1}{(q+1)^2}\sum_{x\in\X}\sum_{y\sim x}|f(y)|^2\sigma_{\alpha}(x)
            +\sum_{x\in\X}|f(x)|^2\sigma_{\alpha}(x)\Bigr),
        \end{align*}
        where the inequality is given by the equivalence of the norms in $\C^{q+2}$, in particular by
        \begin{equation*}
            \|v\|_1\le\sqrt{q+2}\,\|v\|_2, \quad v\in\C^{q+2}.
        \end{equation*}
        The first term of the last equality can be rewritten as
        \begin{align*}
            \frac{1}{(q+1)^2}&\sum_{x\in\X}\sum_{y\sim x}|f(y)|^2\sigma_{\alpha}(x)\\
            &=\frac{1}{(q+1)^2}\Bigl(\sum_{x\in\X}|f(p(x))|^2\sigma_{\alpha}(p(x))q^{-\alpha}
            +\sum_{x\in\X}\sum_{y\in S(x)}|f(y)|^2\sigma_{\alpha}(y)q^{\alpha}\Bigr)\\
            &=\frac{1}{(q+1)^2} \left(q \|f\|^2_{2,\alpha} q^{-\alpha}
            +\|f\|^2_{2,\alpha}q^{\alpha}\right)\\
            &=\frac{q^{1-\alpha}+q^{\alpha}}{(q+1)^2}\,\|f\|^2_{2,\alpha}.
        \end{align*}
        This gives
        \begin{equation*}
            \|\Delta f\|^2_{2,\alpha}
            \le (q+2)\Bigl(\frac{q^{1-\alpha}+q^{\alpha}}{(q+1)^2}+1\Bigr)\|f\|^2_{2,\alpha}
        \end{equation*}
        so that $\Delta f\in\Lda$ and $\Delta:\Lda\to\Lda$ is continuous. It is clear that $\B{2}=\Delta^{-1}(\{0\})$ and so it is closed in $\Lda$ and it is a Hilbert space.
    \end{proof}
\end{prop}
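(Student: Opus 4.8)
The plan is to identify $\B{2}$ as the kernel of a continuous operator on the Hilbert space $\Lda$, from which closedness — and hence completeness — is immediate. First I would recall that $\Lda$ is a genuine Hilbert space: it is the weighted $\ell^2$ space $\ell^2(\X,\sigma_\alpha)$, which is complete for any positive weight. So the only thing to verify is that $\B{2}$ is a closed subspace of $\Lda$. Since $\B{2}$ is by definition the set of $f\in\Lda$ with $\Delta f\equiv 0$, it suffices to show that the combinatorial Laplacian $\Delta$ of \eqref{cl}, a priori defined on all functions on $\X$, actually maps $\Lda$ boundedly into $\Lda$; then $\B{2}=\ker\Delta=\Delta^{-1}(\{0\})$ is closed as the preimage of a closed set under a continuous map.

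The core estimate is therefore $\|\Delta f\|_{2,\alpha}\lesssim\|f\|_{2,\alpha}$. I would expand $\Delta f(x)=\frac{1}{q+1}\sum_{y\sim x}f(y)-f(x)$ and use the elementary norm comparison $\|v\|_1\le\sqrt{q+2}\,\|v\|_2$ on $\C^{q+2}$ (the vector $v$ having the $q+1$ values $f(y)$, $y\sim x$, together with $f(x)$) to bound $|\Delta f(x)|^2$ by $(q+2)\bigl(\frac{1}{(q+1)^2}\sum_{y\sim x}|f(y)|^2+|f(x)|^2\bigr)$. Summing against $\sigma_\alpha$, the term $\sum_x|f(x)|^2\sigma_\alpha(x)$ is just $\|f\|_{2,\alpha}^2$. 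For the cross term I would reorganize the double sum $\sum_x\sum_{y\sim x}|f(y)|^2\sigma_\alpha(x)$ by grouping the contribution of each fixed $y$: the neighbours $x$ of $y$ consist of the single predecessor $p(y)$, lying in $\Ho{\hi{y}-1}$ so that $\sigma_\alpha(x)=q^{-\alpha}\sigma_\alpha(y)\cdot q^{\alpha}$... more carefully, one rewrites the sum as $\sum_x|f(p(x))|^2\sigma_\alpha(p(x))q^{-\alpha}+\sum_x\sum_{y\in S(x)}|f(y)|^2\sigma_\alpha(y)q^{\alpha}$, using that $\sigma_\alpha(p(x))=q^{\alpha}\sigma_\alpha(x)$ and $\sigma_\alpha(y)=q^{-\alpha}\sigma_\alpha(x)$ for $y\in S(x)$. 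Each of these is a constant multiple of $\|f\|_{2,\alpha}^2$ (the first is $q\,q^{-\alpha}\|f\|_{2,\alpha}^2$ since $p$ is $q$-to-one, the second is $q^{\alpha}\|f\|_{2,\alpha}^2$ since summing over $S(x)$ for all $x$ covers $\X$ once). Collecting constants yields $\|\Delta f\|_{2,\alpha}^2\le(q+2)\bigl(\frac{q^{1-\alpha}+q^{\alpha}}{(q+1)^2}+1\bigr)\|f\|_{2,\alpha}^2$.

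The only mildly delicate point — and the one I would be most careful about — is the index bookkeeping in the reorganization of $\sum_x\sum_{y\sim x}$, namely keeping track of how the weight $\sigma_\alpha$ transforms when one re-expresses a sum over neighbours of $x$ as a sum over $x$ being the predecessor or a successor of $y$; this hinges on the fact that $\sigma_\alpha$ is constant on horocycles and scales by $q^{\pm\alpha}$ between consecutive horocycles, which is exactly what makes the computation close. With the boundedness of $\Delta$ in hand, continuity gives $\B{2}=\Delta^{-1}(\{0\})$ closed in $\Lda$, hence a Hilbert space, completing the proof.
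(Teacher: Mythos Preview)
Your proposal is correct and follows essentially the same approach as the paper: you show $\Delta$ is bounded on $\Lda$ via the $\ell^1$--$\ell^2$ comparison in $\C^{q+2}$, reorganize the neighbour sum using the predecessor/successor structure and the horocyclic scaling of $\sigma_\alpha$, and conclude that $\B{2}=\Delta^{-1}(\{0\})$ is closed. Even the explicit constant you obtain matches the paper's.
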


\subsection{$H^1$ and $BMO$ spaces}\label{subs.hbmo}

We introduce now some suitable versions of the classical atomic Hardy spaces and bounded mean oscillation spaces. As already mentioned, in Section~\ref{sec.br} these spaces will be involved in some limiting results concerning the boundedness of integral operators. For a deeper presentation of these spaces in more general settings we refer to \cite{cmm} and \cite{cw}, while for Hardy and $BMO$ spaces on homogeneous trees, see \cite{atv}, \cite{atv2} and \cite{m}.

Before giving the next definition it is worth noticing that the collection of all the balls with respect to the Gromov distance $\rho$, that is
\begin{equation*}
    \mathcal{D}=\Bigl\{ U_x, \{x\}: x\in\X\Bigr\},
\end{equation*}
plays a similar role, with respect to $\sigma_{\alpha}$, of the dyadic sets in the Euclidean setting. In particular for all $k\in\Z$ we set \begin{equation*}
    \mathcal{D}_k=\Bigl\{U_x:x\in\Ho{k}\Bigr\}\cup\Bigl\{\{y\}:y\in\HBo{k-1}\Bigr\}\subseteq\mathcal{D}.
\end{equation*}
It is clear that for every $k\in\Z$, the family $\mathcal{D}_k$ is a partition of $\X$, $\mathcal{D}_k$ is a refinement of $\mathcal{D}_{k-1}$ and for all $D\in\mathcal{D}_k$ and $D'\in\mathcal{D}_{k-1}$ such that $D\subseteq D'$, we have $\sigma_{\alpha}(D)\approx\sigma_{\alpha}(D')$.
For further details on dyadic sets for trees we refer to \cite{m}.

\begin{defin}
    Let $1<p\le\infty$, a $(1,p)$-atom is a function $a$ such that
    \begin{enumerate}[i)]
        \item $a$ is supported in $D$ for some $D\in\mathcal{D}$;
        \item $\|a\|_{p,\alpha}\le\sigma_{\alpha}(D)^{\frac{1}{p}-1}$ if $p<\infty$ and $\|a\|_{\infty,\alpha}\le\sigma_{\alpha}(D)^{-1}$ if $p=\infty$;
        \item $a$ has vanishing mean on $D$, that is
        \begin{equation*}
            \sum_{x\in D} a(x) q^{-\alpha\hi{x}}=0.
        \end{equation*}
    \end{enumerate}
\end{defin}

\begin{defin}
    Let $1<p\le\infty$, the atomic Hardy space $H^{1,p}_{\alpha}$ is the space of functions $f\in L^1_\alpha$ such that
    \begin{equation*}
        f=\sum_{i} \lambda_i a_i,
    \end{equation*}
    where the $a_i$ are $(1,p)$-atoms and $(\lambda_i)_i$ is a complex summable sequence. The $H^{1,p}_{\alpha}$ norm of $f$ is given by
    \begin{equation*}
        \|f\|_{H^{1,p}_{\alpha}}:=\inf\Bigl\{\sum_{i}|\lambda_i|:f=\sum_{i} \lambda_i a_i,\,a_i \text{ $(1,p)$-atoms}\Bigr\}.
    \end{equation*}
\end{defin}

Notice that by H\"older inequality a $(1,p_2)$-atom is also a $(1,p_1)$-atom for all $p_1\le p_2$. It easily follows that the inclusions
\begin{equation*}
    H^{1,\infty}_{\alpha}\subseteq H^{1,p_2}_{\alpha}\subseteq H^{1,p_1}_{\alpha}
\end{equation*}
hold for every $p_1\le p_2$.

\begin{defin}
    Let $f\colon\X\to\C$ and $D\in\mathcal{D}$, we indicate by $f_D$ the mean of $f$ on $D$ with respect to $\sigma_{\alpha}$, that is
    \begin{equation*}
        f_D=\frac{1}{\sigma_{\alpha}(D)}\sum_{x\in D} f(x)q^{-\alpha\hi{x}}.
    \end{equation*}
    Let $1\le r<\infty$, the bounded mean oscillation space $BMO^r_{\alpha}$ is the space of functions $f\colon\X\to\C$ such that
    \begin{equation*}
        \sup_{D\in\mathcal{D}}\frac{1}{\sigma_{\alpha}(D)}\sum_{x\in D} |f(x)-f_D|^r q^{-\alpha\hi{x}} <\infty,
    \end{equation*}
    modulo the constant functions. The $BMO^r_{\alpha}$-norm is given by
    \begin{equation*}
        \|f\|_{BMO^r_{\alpha}}:=\sup_{D\in\mathcal{D}}\Bigl(\frac{1}{\sigma_{\alpha}(D)}\sum_{x\in D} |f(x)-f_D|^r q^{-\alpha\hi{x}}\Bigr)^{\frac{1}{r}}.
    \end{equation*}
\end{defin}

Again by H\"older inequality we obtain the following inclusions
\begin{equation*}
    BMO^{r_2}_{\alpha}\subseteq BMO^{r_1}_{\alpha}\subseteq BMO^{1}_{\alpha}
\end{equation*}
for all $r_1\le r_2$.

In a doubling setting there are two outstanding results by R.R.~Coifman and G.~Weiss, presented in \cite{cw}, concerning $H^1$ and $BMO$ spaces. Theorem A gives that $H^{1,p}_{\alpha}=H^{1,\infty}_{\alpha}$ as vector spaces with equivalent norms for all $1<p\le\infty$. Furthermore by Theorem B, the space $BMO^{p'}_{\alpha}$ characterizes the dual of $H^{1,p}_{\alpha}$ for all $1<p\le\infty$ and $p'$ its conjugate. As a byproduct of these results we also obtain that $BMO^r_{\alpha}=BMO^1_{\alpha}$  as vector spaces with equivalent norms for all $1\le r<\infty$. Consequently we will denote the space $H^{1,\infty}_{\alpha}$ by $H^1_{\alpha}$ and the space $BMO^1_{\alpha}$ by $BMO_{\alpha}$.

\section{The orthonormal basis of $\B{2}$}

This section is devoted to give an explicit construction of an orthonormal basis for the space $\B{2}$.

For all $v\in\X$, consider the space
\begin{equation*}
    Z_v:=\Bigl\{\phi\colon S(v)\to\C : \sum_{x\in S(v)}\phi(x)=0\Bigr\}
    \simeq\C^{q-1}.
\end{equation*}
Let $\{a_{v,j}:j=1,\dots,q-1\}$ be an orthonormal basis for $Z_v$ and let $A_{v,j}$ be the extension by zero of $a_{v,j}$ to all of $\X$, namely $A_{v,j}:\X\to\C$ is given by
\begin{equation*}
    A_{v,j}(x)=
    \begin{dcases}
    a_{v,j}(x) &\mbox{if } x\in S(v), \\
    0 &\mbox{if } x\notin S(v).
    \end{dcases}
\end{equation*}
Observe that $A_{v,j}$ is harmonic and vanishes on the horoball $\HBo{\hi{v}}$. Now we set $g_{v,j}:=(A_{v,j})^H_{\hi{v}}$, that is
\begin{equation*}
    g_{v,j}(x)=
    \begin{dcases}
    0 &\mbox{if } x\notin \dot U_v, \\
    \Bigl(\sum_{i=0}^{\hi{x}-\hi{v}-1}q^{-i}\Bigr) A_{v,j}(p^{\hi{x}-\hi{v}-1}(x))&\mbox{if } x\in \dot U_v,
    \end{dcases}
\end{equation*}
since $A_{v,j}|_{\Ho{\hi{v}}}=0$.
It is clear that $g_{v,j}$ is harmonic on $\X$ for every $v\in\X$ and every $j=1,\dots,q-1$. In addition, by Example \ref{es} we know that it is bounded and that it belongs to $\B{2}$.
By Lemma \ref{sp}, the $\B{2}$ norm of $g_{v,j}$ is
\begin{equation}\label{ng}
    \|g_{v,j}\|_{\B{2}}^2=\langle g_{v,j},g_{v,j}\rangle_{\B{2}}
    =\sum_{y\in S(v)}\overline{g_{v,j}(y)}g_{v,j}(y)b_{\hi{v}}=b_{\hi{v}}\|a_{v,j}\|^2_{Z_v}=b_{\hi{v}}.
\end{equation}

\begin{thm}\label{cos}
    The family
    \begin{equation*}
        \mathcal{S}
        =\left\{g_{v,j}: v\in\X, \, j=1,\dots,q-1\right\}
    \end{equation*}
    is a complete orthogonal system for $\B{2}$ for every $\alpha>1$.
    \begin{proof}
        We start by showing that $\mathcal{S}$ is an orthogonal system in $\B{2}$, that is
        \begin{equation*}
            \langle\, g_{v,i},\,g_{w,j}\,\rangle_{\B{2}}=0 \quad \text{whenever } (v,i)\ne(w,j).
        \end{equation*}
        If $v\ne w$ then either $U_v\cap U_w=\varnothing$ or, without loss of generality, $U_v\cap U_w=U_v\ne U_w$.
        In the first case, $\supp(g_{v,i})\cap\supp(g_{w,j})=\varnothing$ for all $i,j\in\{1,\dots,q-1\}$, so that $\langle\, g_{v,i},\,g_{w,j}\,\rangle_{\B{2}}=0$.
        In the second case $U_v\subsetneq U_w$ implies that $\hi{v}>\hi{w}$, which gives us that $g_{v,i}(x)=0$ for all $x\in\HBo{\hi{w}+1}\subseteq\HBo{\hi{v}}$. By Lemma \ref{sp}, for every $i,j\in\{1,\dots,q-1\}$
        \begin{equation*}
            \langle\, g_{v,i},\,g_{w,j}\,\rangle_{\B{2}}
            =\sum_{y\in S(w)} \overline{g_{w,j}(y)}
            \Bigl(g_{v,i}(y)b_{\hi{w}}-g_{v,i}(w) b_{\hi{w}}'\Bigr)
            =0.
        \end{equation*}
        If $v=w$, let $i,j\in\{1,\dots,q-1\}$, $i\ne j$, we know that $g_{v,i}(x)=0$ for all $x\in\HBo{\hi{v}}$ and by Lemma \ref{sp}
        \begin{align*}
            \langle\, g_{v,i},\,g_{v,j}\,\rangle_{\B{2}}
            &=\sum_{y\in S(v)} \overline{g_{v,j}(y)}
            \Bigl(g_{v,i}(y)b_{\hi{v}}-g_{v,i}(v) b_{\hi{v}}'\Bigr)\\
            &=b_{\hi{v}}\sum_{y\in S(v)} \overline{a_{v,j}(y)}a_{v,i}(y)=0
        \end{align*}
        since $\{a_{v,j}:j=1,\dots,q-1\}$ is an orthogonal system in $Z_v$. We conclude that $\mathcal{S}$ is an orthogonal in $\B{2}$.
        
        In order to see that $\mathcal{S}$ is complete in $\B{2}$ we take $f\in\B{2}$ such that $\langle\,f,\,g_{v,j}\,\rangle_{\B{2}}=0$
        for all $v\in\X$ and $j\in\{1,\dots,q-1\}$.
        By Lemma \ref{sp}
        \begin{align*}
            \langle\, f,\,g_{v,j}\,\rangle_{\B{2}}
            &=\sum_{y\in S(v)} \overline{a_{v,j}(y)}
            \Bigl(f(y)b_{\hi{v}}-f(v) b_{\hi{v}}'\Bigr)\\
            &=b_{\hi{v}}\sum_{y\in S(v)} \overline{a_{v,j}(y)}f(y),
        \end{align*}
        where the last equality is due to the fact that $a_{v,j}\in Z_v$ and then it has vanishing mean on $S(v)$. Then for all $v\in\X$ and $j\in\{1,\dots,q-1\}$ we have
        \begin{equation*}
            \langle\, f|_{S(v)},\,a_{v,j}\,\rangle_{\C^{S(v)}}
            =\sum_{y\in S(v)} \overline{a_{v,j}(y)}f(y)
            =0.
        \end{equation*}
        Since $\{a_{v,j}:j=1,\dots,q-1\}$ is an orthonormal basis for $Z_v$ and $Z_v$ is a subspace of $\C^q$, $f|_{S(v)}$ must lie in the orthogonal of $Z_v$ in $\C^q$. It is clear that $Z_v^{\perp}$ has dimension 1 and it is easy to see that it is generated by $a_{v,q}$ given by
        \begin{equation*}
            a_{v,q}(y)=\frac{1}{\sqrt{q}}, \quad y\in S(v).
        \end{equation*}
        It follows that $f|_{S(v)}$ is constant, say $f|_{S(v)}\equiv c_v$ for every $v\in\X$. Now consider $x\in\X$ and the infinite chain $[x,\omega)=\{v_n\}_{n=0}^{+\infty}$. The function $f$ is harmonic in $v_{n+1}$ and constant on its successors for all $n\in\N$, hence
        \begin{equation*}
            f(v_{n+1})
            =\frac{1}{q+1}f(p(v_{n+1}))+\frac{1}{q+1}\sum_{y\in S(v_{n+1})} f(y)
            =\frac{1}{q+1}f(v_{n+2})+\frac{q}{q+1} f(v_{n}),
        \end{equation*}
        that is
        \begin{equation*}
            f(v_n)
            =\frac{q+1}{q}f(v_{n+1})-\frac{1}{q}f(v_{n+2}),
            \quad n\in\N.
        \end{equation*}
        Proceeding in a similar way as in the proof of Lemma \ref{p1} one can easily obtain that for every $k\in\N$
        \begin{equation*}
            f(v_0)
            =\Bigl(\sum_{i=0}^k q^{-i}\Bigr) f(v_{k})
            -\Bigl(\sum_{i=1}^k q^{-i}\Bigr) f(v_{k+1}).
        \end{equation*}
        In particular, since $f\in\B{2}$ implies that $f(v_k)\to0$ as $k\to+\infty$,
        \begin{equation*}
            f(v_0)
            =\lim_{k\to +\infty}\Bigl(\frac{q-q^{-k}}{q-1}f(v_{k})
            -\frac{1-q^{-k}}{q-1}f(v_{k+1})\Bigr)=0
        \end{equation*}
        This proves that $f\equiv0$ and then that $\mathcal{S}$ is complete in $\B{2}$.
    \end{proof}
\end{thm}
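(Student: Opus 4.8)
The plan is to prove the two assertions — orthogonality and completeness — separately, exploiting the nesting structure of sectors and Lemma \ref{sp} throughout. For orthogonality, I would first observe that for $v\neq w$ the sectors $U_v$ and $U_w$ are either disjoint or nested; in the disjoint case the supports of $g_{v,i}$ and $g_{w,j}$ are disjoint and the inner product vanishes trivially. In the nested case, say $U_v\subsetneq U_w$, we have $\hi{v}>\hi{w}$, so $g_{v,i}$ vanishes on $\HBo{\hi{w}+1}$ and in particular on $S(w)$ and at $w$. Since $g_{w,j}=(A_{w,j})^H_{\hi{w}}$ and $g_{v,i}$ vanishes on $\HBo{\hi{w}}$, I can apply the formula of Lemma \ref{sp} (with $g=A_{w,j}$, $n=\hi{w}$, $y=w$): the inner product equals $\sum_{z\in S(w)}\overline{A_{w,j}(z)}\bigl(g_{v,i}(z)b_{\hi{w}}-g_{v,i}(w)b_{\hi{w}}'\bigr)$, and both $g_{v,i}(z)$ for $z\in S(w)$ and $g_{v,i}(w)$ are zero, so the whole expression vanishes. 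For $v=w$ but $i\neq j$, I again use Lemma \ref{sp}: since $g_{v,i}$ vanishes on $\HBo{\hi{v}}$ the inner product reduces to $b_{\hi{v}}\sum_{z\in S(v)}\overline{a_{v,j}(z)}a_{v,i}(z)$, which is $b_{\hi{v}}\langle a_{v,i},a_{v,j}\rangle_{Z_v}=0$ by orthogonality of the chosen basis of $Z_v$.

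For completeness, I would take $f\in\B{2}$ orthogonal to every $g_{v,j}$ and show $f\equiv 0$. Applying Lemma \ref{sp} to $\langle f,g_{v,j}\rangle_{\B{2}}=0$ and using that $a_{v,j}$ has zero mean on $S(v)$ (so the $f(v)b_{\hi{v}}'$ term drops out), I get $\sum_{y\in S(v)}\overline{a_{v,j}(y)}f(y)=0$ for every $v$ and every $j=1,\dots,q-1$. This says $f|_{S(v)}$ is orthogonal in $\C^{S(v)}\cong\C^q$ to the whole of $Z_v$, hence lies in the one-dimensional space $Z_v^\perp$, which is spanned by the constant function $y\mapsto 1/\sqrt q$. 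Therefore $f$ is constant on $S(v)$ for every $v\in\X$, say equal to $c_v$.

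The final step — and the part requiring the most care — is to upgrade ``$f$ is constant on every successor set'' to ``$f\equiv 0$''. Here I would fix an arbitrary $x\in\X$, follow the geodesic ray $[x,\omega)=\{v_n\}_{n\ge 0}$ toward $\omega$, and exploit harmonicity of $f$ at each $v_{n+1}$ together with the fact that $f$ is constant $c_{v_{n+1}}$ on $S(v_{n+1})$ and that $v_n\in S(v_{n+1})$. The mean-value identity at $v_{n+1}$ becomes a linear three-term recursion $f(v_n)=\frac{q+1}{q}f(v_{n+1})-\frac1q f(v_{n+2})$, formally identical to the recursion \eqref{a1} from Lemma \ref{p1} (with $q^{-1}$ in place of $q$), which solves to $f(v_0)=\bigl(\sum_{i=0}^k q^{-i}\bigr)f(v_k)-\bigl(\sum_{i=1}^k q^{-i}\bigr)f(v_{k+1})$ for all $k$. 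Since $f\in\B{2}$ forces $f(v_k)\to 0$ as $k\to\infty$ (the tail of a convergent $\ell^2$-type series along the ray, because $\sigma_\alpha(v_k)=q^{-\alpha\hi{v_k}}\to\infty$ towards $\omega$ — wait, here $\hi{v_k}\to-\infty$, so $\sigma_\alpha(v_k)=q^{-\alpha\hi{v_k}}\to+\infty$, and hence $|f(v_k)|^2\sigma_\alpha(v_k)\to 0$ forces $f(v_k)\to 0$), passing to the limit gives $f(v_0)=f(x)=0$. Since $x$ was arbitrary, $f\equiv 0$, which proves $\mathcal S$ is complete. The main obstacle is this decay argument: one must be sure that membership in $\B{2}$ genuinely forces $f(v_k)\to 0$ along the ray toward $\omega$, which does hold because $\sigma_\alpha(v_k)\to+\infty$ as $k\to\infty$.
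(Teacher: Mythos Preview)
Your proposal is correct and follows essentially the same route as the paper's proof: the same disjoint/nested case split for orthogonality via Lemma~\ref{sp}, the same reduction of completeness to ``$f$ is constant on every $S(v)$'' via $Z_v^\perp$, and the same three-term recursion along the ray $[x,\omega)$ combined with the decay $f(v_k)\to0$. Your justification of the decay (terms of a convergent series go to zero and $\sigma_\alpha(v_k)\to+\infty$) is in fact slightly more explicit than the paper's, which simply asserts it.
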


It is worth noticing that the complete orthogonal system $\mathcal{S}$ does not depend on $\alpha$. From Theorem \ref{cos} and \eqref{ng}, we immediately have the following result.

\begin{cor}
    For every $\alpha>1$ the family
    \begin{equation*}
        \mathcal{S}_{\alpha}
        =\Bigl\{\frac{g_{v,j}}{\sqrt{b_{\hi{v},\alpha}}}: v\in\X, \,j=1,\dots,q-1\Bigr\}
    \end{equation*}
    is an orthonormal basis for $\B{2}$.
\end{cor}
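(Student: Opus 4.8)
The plan is to deduce the statement directly from Theorem \ref{cos} together with the norm computation \eqref{ng}, so the work is essentially a normalization argument. First I would recall that Theorem \ref{cos} asserts that $\mathcal{S}=\{g_{v,j}:v\in\X,\,j=1,\dots,q-1\}$ is a complete orthogonal system in $\B{2}$, and that \eqref{ng} gives $\|g_{v,j}\|_{\B{2}}^2=b_{\hi{v},\alpha}$, a quantity which is strictly positive by Lemma \ref{sp}. Hence each $g_{v,j}$ is a nonzero vector of norm exactly $\sqrt{b_{\hi{v},\alpha}}$, and the rescaled functions $g_{v,j}/\sqrt{b_{\hi{v},\alpha}}$ are well defined unit vectors.

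Next I would observe that dividing each member of $\mathcal{S}$ by the positive scalar $\sqrt{b_{\hi{v},\alpha}}$ preserves pairwise orthogonality, since scaling by scalars does not affect the vanishing of inner products, and that it also preserves completeness: if $f\in\B{2}$ satisfies $\langle f,g_{v,j}/\sqrt{b_{\hi{v},\alpha}}\rangle_{\B{2}}=0$ for all $v$ and $j$, then $\langle f,g_{v,j}\rangle_{\B{2}}=0$ for all $v$ and $j$, whence $f\equiv 0$ by the completeness part of Theorem \ref{cos}. Therefore $\mathcal{S}_\alpha$ is a complete orthonormal system in the Hilbert space $\B{2}$ (recall $\B{2}$ is indeed a Hilbert space by Proposition \ref{hs}), and a complete orthonormal system in a Hilbert space is, by definition, an orthonormal basis.

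There is no genuine obstacle here: all the substance is carried by Theorem \ref{cos} and by the self-adjointness/continuity machinery used to establish it, and the corollary is just the standard passage from a complete orthogonal family to an orthonormal basis. The only subtlety worth flagging explicitly is the dependence on $\alpha$: the family $\mathcal{S}$ itself is $\alpha$-independent, but the normalizing constants $b_{\hi{v},\alpha}$ are not, so the orthonormal basis $\mathcal{S}_\alpha$ genuinely varies with $\alpha$, which is why the subscript is retained in the notation.
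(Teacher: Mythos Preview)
Your proposal is correct and matches the paper's approach exactly: the paper states that the corollary follows immediately from Theorem~\ref{cos} and~\eqref{ng}, and your normalization argument spells out precisely this.
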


\section{The reproducing kernel for $\B{2}$}
The goal of this section is to show that $\B{2}$ is a reproducing kernel Hilbert space and to find an explicit formula for the kernel.

Fix $y\in\X$ and consider the evaluation functional $\Phi_y:\B{2}\to\C$ given by $\Phi_y f:=f(y)$. We have that
\begin{equation*}
    |\Phi_y(f)|^2
    =|f(y)|^2
    \le \frac{1}{\sigma_{\alpha}(y)} \sum_{x\in\X}|f(x)|^2\sigma_{\alpha}(x)
    =\frac{1}{\sigma_{\alpha}(y)}\|f\|^2_{2,\alpha}.
\end{equation*}
Thus $\Phi_y$ is continuous and $\B{2}$ is a reproducing kernel Hilbert space, that is for all $y\in\X$ there exists $ \mathcal K_{\alpha,y}\in\B{2}$ such that
\begin{equation*}
    f(y)=\langle f, \mathcal K_{\alpha,y}\rangle_{\B{2}}, \quad f\in\B{2}.
\end{equation*}
The operator $\mathcal{K}_{\alpha}:\X\times\X\to\C$ defined by $\mathcal{K}_{\alpha}(y,x):= \mathcal K_{\alpha,y}(x)$
is the \textit{reproducing kernel} of $\B{2}$.

In order to compute $ \mathcal K_{\alpha,y}$, we have to find the reproducing kernels of the spaces $Z_v$ as a preliminary step.
We fix $v\in\X$ and define the function
\begin{equation*}
    \Gamma_v(x,y)=
    \begin{dcases}
    0 &\mbox{if } \{x,y\}\notin \dot U_v, \\
    \frac{q-1}{q} &\mbox{if } \{x,y\}\in U_z \text{ for some }z\in S(v),\\
    -\frac{1}{q}&\mbox{otherwise}.
    \end{dcases}
\end{equation*}

\begin{rem}
    Let $v\in\X$. Some observations on $\Gamma_v$ are in order.
    \begin{enumerate}[1)]
        \item $\Gamma_v$ is symmetric, that is $\Gamma_v(x,y)=\Gamma_v(y,x)$ for all $x,y\in\X$;
        \item $\Gamma_v(x,\cdot)\equiv 0$ for all $x\notin \dot U_v$ and $\supp(\Gamma_v(x,\cdot))=\dot U_v$ for all $x\in \dot U_v$ and, in this case, the values of $\Gamma_v(x,\cdot)$ are completely determined by the values on $S(v)$. Indeed, if $x\in \dot U_v$ then $x\in U_z$ for some $z\in S(v)$ and
        \begin{equation*}
            \Gamma_v(x,y)=
    \begin{dcases}
    \frac{q-1}{q} &\mbox{if } y\in U_z,\\
    -\frac{1}{q} &\mbox{if } y\notin U_z.
    \end{dcases}
        \end{equation*}
A representation is given in Figure \ref{f2}.

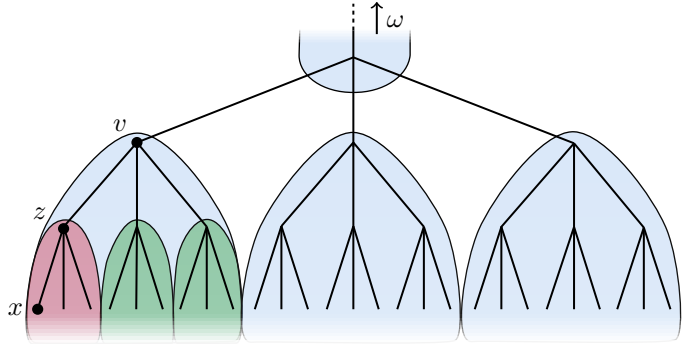
\begin{figure}[h]
\centering

  
\tikzset {_bjzmyzj0p/.code = {\pgfsetadditionalshadetransform{ \pgftransformshift{\pgfpoint{0 bp } { 0 bp }  }  \pgftransformrotate{-90 }  \pgftransformscale{2 }  }}}
\pgfdeclarehorizontalshading{_n4ulv0gr0}{150bp}{rgb(0bp)=(1,1,1);
rgb(38.57142857142857bp)=(1,1,1);
rgb(56.875bp)=(1,1,1);
rgb(59.91071428571429bp)=(1,1,1);
rgb(100bp)=(1,1,1)}
\tikzset{_lyp9mq92y/.code = {\pgfsetadditionalshadetransform{\pgftransformshift{\pgfpoint{0 bp } { 0 bp }  }  \pgftransformrotate{-90 }  \pgftransformscale{2 } }}}
\pgfdeclarehorizontalshading{_w27zwhn6s} {150bp} {color(0bp)=(transparent!100);
color(38.57142857142857bp)=(transparent!100);
color(56.875bp)=(transparent!90);
color(59.91071428571429bp)=(transparent!0);
color(100bp)=(transparent!0) } 
\pgfdeclarefading{_o8th4xx3n}{\tikz \fill[shading=_w27zwhn6s,_lyp9mq92y] (0,0) rectangle (50bp,50bp); } 

  
\tikzset {_dm2492vvk/.code = {\pgfsetadditionalshadetransform{ \pgftransformshift{\pgfpoint{0 bp } { 0 bp }  }  \pgftransformrotate{-90 }  \pgftransformscale{2 }  }}}
\pgfdeclarehorizontalshading{_oupfulhck}{150bp}{rgb(0bp)=(1,1,1);
rgb(37.5bp)=(1,1,1);
rgb(50.26785714285714bp)=(1,1,1);
rgb(54.19642857142857bp)=(1,1,1);
rgb(62.5bp)=(1,1,1);
rgb(100bp)=(1,1,1)}
\tikzset{_i676qejk3/.code = {\pgfsetadditionalshadetransform{\pgftransformshift{\pgfpoint{0 bp } { 0 bp }  }  \pgftransformrotate{-90 }  \pgftransformscale{2 } }}}
\pgfdeclarehorizontalshading{_rca11kdf5} {150bp} {color(0bp)=(transparent!0);
color(37.5bp)=(transparent!0);
color(50.26785714285714bp)=(transparent!0);
color(54.19642857142857bp)=(transparent!100);
color(62.5bp)=(transparent!100);
color(100bp)=(transparent!100) } 
\pgfdeclarefading{_27o1hv7oy}{\tikz \fill[shading=_rca11kdf5,_i676qejk3] (0,0) rectangle (50bp,50bp); } 
\tikzset{every picture/.style={line width=0.75pt}} 

\begin{tikzpicture}[x=0.75pt,y=0.75pt,scale=0.9,yscale=-1,xscale=1]

\draw  [color={rgb, 255:red, 0; green, 0; blue, 0 }  ,draw opacity=1, line width=0.5 ][fill={rgb, 255:red, 74; green, 144; blue, 226 }  ,fill opacity=0.2 ] (73.87,248.25) .. controls (84.08,248.82) and (176.33,248.33) .. (183.7,247.19) .. controls (191.06,246.05) and (188,204.05) .. (180.95,185.95) .. controls (173.9,167.86) and (148.39,131.62) .. (130.34,130.99) .. controls (112.29,130.36) and (85.21,168.15) .. (77.56,185.95) .. controls (69.91,203.76) and (63.67,247.67) .. (73.87,248.25) -- cycle ;

\draw  [color={rgb, 255:red, 0; green, 0; blue, 0 }  ,draw opacity=1, line width=0.5][fill={rgb, 255:red, 2; green, 138; blue, 10 }  ,fill opacity=0.35  ]
(112.4,248.82) .. controls (118.59,249.21) and (146.6,248.26) .. (148.8,248.63) .. controls (151,249) and (150,215) .. (147.17,202.54) .. controls (144.35,190.08) and (137.54,179.49) .. (129.64,179.51) .. controls (121.74,179.54) and (115.15,196.01) .. (113.09,202.15) .. controls (111.02,208.3) and (106.21,248.44) .. (112.4,248.82) -- cycle ;
\draw  [color={rgb, 255:red, 0; green, 0; blue, 0 }  ,draw opacity=1, line width=0.5 ][fill={rgb, 255:red, 2; green, 138; blue, 10 }  ,fill opacity=0.35 ]
(152.26,248.54) .. controls (156.74,248.64) and (179.16,248.34) .. (184.66,248.34) .. controls (190.17,248.34) and (187,211) .. (185.04,202.25) .. controls (183.08,193.5) and (176.41,179.21) .. (168.51,179.23) .. controls (160.61,179.25) and (153.57,192.07) .. (151.95,201.87) .. controls (150.33,211.67) and (147.79,248.43) .. (152.26,248.54) -- cycle ;
\draw  [color={rgb, 255:red, 0; green, 0; blue, 0 }  ,draw opacity=1, line width=0.5 ][fill={rgb, 255:red, 208; green, 2; blue, 27 }  ,fill opacity=0.35 ] (71.81,248.82) .. controls (78,249.21) and (100.7,248.63) .. (106.21,248.63) .. controls (111.71,248.63) and (110,216.33) .. (106.58,202.54) .. controls (103.17,188.74) and (97.95,179.49) .. (90.05,179.51) .. controls (82.15,179.54) and (76.66,187.31) .. (72.5,202.15) .. controls (68.33,217) and (65.62,248.44) .. (71.81,248.82) -- cycle ;
\draw  [color={rgb, 255:red, 0; green, 0; blue, 0 }  ,draw opacity=1, line width=0.5 ][fill={rgb, 255:red, 74; green, 144; blue, 226 }  ,fill opacity=0.2  ]
(193.24,247.86) .. controls (204.52,247.07) and (300.42,249.96) .. (306.06,246.81) .. controls (311.7,243.66) and (308.37,203.66) .. (301.32,185.57) .. controls (294.27,167.47) and (267.76,131.23) .. (249.71,130.6) .. controls (235.09,130.09) and (214.56,154.77) .. (203.21,173.63) .. controls (200.55,178.07) and (198.39,182.18) .. (196.93,185.57) .. controls (189.28,203.37) and (181.96,248.65) .. (193.24,247.86) -- cycle ;
\draw  [color={rgb, 255:red, 0; green, 0; blue, 0 }  ,draw opacity=1, line width=0.5][fill={rgb, 255:red, 74; green, 144; blue, 226 }  ,fill opacity=0.2  ]
(315.36,247.48) .. controls (326.64,246.69) and (422.54,249.57) .. (428.18,246.42) .. controls (433.82,243.27) and (430.49,203.28) .. (423.44,185.18) .. controls (416.39,167.09) and (389.88,130.85) .. (371.83,130.22) .. controls (353.78,129.59) and (326.7,167.38) .. (319.05,185.18) .. controls (311.4,202.99) and (304.08,248.26) .. (315.36,247.48) -- cycle ;
\draw  [draw opacity=0][shading=_n4ulv0gr0,_bjzmyzj0p,path fading= _o8th4xx3n ,fading transform={xshift=2}] (53.79,127.86) -- (437.13,127.86) -- (437.13,259) -- (53.79,259) -- cycle ;
\draw  [line width=0.5,
][fill={rgb, 255:red, 74; green, 144; blue, 226 }  ,fill opacity=0.2  ] (281.08,49.6) -- (281.08,87.38) .. controls (279.98,99.08) and (266.32,108.42) .. (249.61,108.23) .. controls (232.87,108.42) and (219.41,99.08) .. (219.53,87.38) -- (220.51,49.6) -- cycle ;
\draw  [draw opacity=0][shading=_oupfulhck,_dm2492vvk,path fading= _27o1hv7oy ,fading transform={xshift=2}]
(216.53,45.6) -- (284.08,45.6) -- (284.08,96.6) -- (216.53,96.6) -- cycle ;

\draw  (129.65,136.21) -- (249.61,88.63) ;
\draw    (371.95,136.66) -- (249.61,88.63) ;
\draw    (371.95,183.5) -- (371.95,136.66) ;
\draw    (410.64,183.74) -- (371.95,136.66) ;
\draw    (331.26,183.82) -- (371.95,136.66) ;
\draw    (410.64,229.42) -- (410.64,183.74) ;
\draw    (425.4,229.42) -- (410.64,183.74) ;
\draw    (395.88,229.42) -- (410.64,183.74) ;
\draw    (371.95,229.42) -- (371.95,183.42) ;
\draw    (386.31,229.42) -- (371.95,183.42) ;
\draw    (356.79,229.42) -- (371.95,183.42) ;
\draw    (331.66,229.42) -- (331.66,183.42) ;
\draw    (346.42,229.42) -- (331.66,183.42) ;
\draw    (316.9,229.42) -- (331.66,183.42) ;
\draw    (249.61,182.97) -- (249.61,136.21) ;
\draw    (288.7,183.29) -- (249.61,136.21) ;
\draw    (209.72,182.97) -- (249.61,136.21) ;
\draw    (288.7,229.42) -- (288.7,183.29) ;
\draw    (303.46,229.42) -- (288.7,183.29) ;
\draw    (273.94,229.42) -- (288.7,183.29) ;
\draw    (249.61,229.42) -- (249.61,182.97) ;
\draw    (263.68,229.42) -- (249.61,182.97) ;
\draw    (234.85,229.42) -- (249.61,182.97) ;
\draw    (209.72,229.42) -- (209.72,182.97) ;
\draw    (224.48,229.42) -- (209.72,182.97) ;
\draw    (194.96,229.42) -- (209.72,182.97) ;
\draw    (129.65,183.05) -- (129.65,136.21) ;
\draw    (168.34,183.29) -- (129.65,136.21) ;
\draw    (88.97,183.37) -- (129.65,136.21) ;
\draw    (168.34,229.42) -- (168.34,183.29) ;
\draw    (183.1,229.42) -- (168.34,183.29) ;
\draw    (153.58,229.42) -- (168.34,183.29) ;
\draw    (129.25,229.42) -- (129.65,183.05) ;
\draw    (144.01,229.42) -- (129.65,183.05) ;
\draw    (114.49,229.42) -- (129.65,183.05) ;
\draw    (88.97,229.42) -- (88.97,183.37) ;
\draw    (104.12,229.42) -- (88.97,183.37) ;
\draw    (74.61,229.42) -- (88.97,183.37) ;
\draw    (249.61,138.63) -- (249.61,73.63) ;
\draw [dashed, dash pattern={on 1.5 pt off 1.5pt}]   (249.61,73.63)--(249.61,58.63) ;
\draw [->]   (262.61,75.63)--(262.61,57.63) ;

\filldraw  [color=black,line width=0.2mm ,draw opacity=1 ][fill=black  ,fill opacity=1 ] (74.61,229.42) circle (2pt);
\filldraw  [color=black,line width=0.2mm ,draw opacity=1 ][fill=black  ,fill opacity=1 ] (129.65,136.21) circle (2pt);
\filldraw  [color=black,line width=0.2mm ,draw opacity=1 ][fill=black  ,fill opacity=1 ] (88.97,184.37) circle (2pt);

	\node at (72,229.42) [anchor= east] {$x$};
	\node at (262.61,68.63) [anchor=west] {$\omega$};
	\node at (129.65,135.21) [anchor=south east] {$v$};
    \node at (84.97,184.37) [anchor=south east] {$z$};

\end{tikzpicture}

    \caption{Realization of $\Gamma_v(x,\cdot)$ for $x\in U_z$, $z\in S(v)$. The function vanishes on the blue area, while its value is $\frac{q-1}{q}$ on the red sector and $-\frac{1}{q}$ on the green ones
    \label{f2}}

\end{figure}

        \item The function $\Gamma_v$ is the reproducing kernel of $Z_v$, that is for every $\varphi\in Z_v$ and $x\in S(v)$,
        \begin{equation*}
        \varphi(x)
        =\langle\,\varphi,\,\Gamma_v(x,\cdot)|_{S(v)}\,\rangle_{Z_v}.
        \end{equation*}
        Indeed,
        \begin{equation*}
            \sum_{y\in S(v)} \Gamma_v(x,y)
            =\frac{q-1}{q}-\frac{1}{q}(q-1)
            =0
        \end{equation*}
        and then $\Gamma_v(x,\cdot)|_{S(v)}\in Z_v$. Now, if $\varphi\in Z_v$, then 
        \begin{align*}
            \langle\,\varphi,\,\Gamma_v(x,\cdot)|_{S(v)}\,\rangle_{Z_v}
            &=\sum_{y\in S(v)}\varphi(y)\overline{\Gamma_v(x,y)}\\
            &=\frac{q-1}{q}\varphi(x)-\frac{1}{q}\sum_{y\in S(v)\setminus\{x\}}\varphi(y)\\
            &=\frac{q-1}{q}\varphi(x)-\frac{1}{q}(-\varphi(x))
            =\varphi(x),
        \end{align*}
        where we used the fact that $\varphi$ has vanishing mean on $S(v)$.
        \item The function $\Gamma_v(x,\cdot)$ is harmonic on $\HBo{\hi{v}}$, indeed it vanishes there and it is sufficient to check the harmonicity at $v$ in the case $x\in \dot U_v$, which follows by $\supp(\Gamma_v(x,\cdot))=\dot U_v$ and $\Gamma_v(x,\cdot)|_{S(v)}\in Z_v$.
        This allows us to consider, for all $v\in\X$, the harmonic extension $(\Gamma_{p(v)}(v,\cdot))^H_{\hi{p(v)}}$, that is
        \begin{equation*}
        (\Gamma_{p(v)}(v,\cdot))^H_{\hi{p(v)}}=
        \begin{dcases}
            0 &\mbox{if } y\notin \dot U_{p(v)},\\
            \Bigl(\sum_{i=0}^{\hi{y}-\hi{v}}q^{-i}\Bigr)\frac{q-1}{q} &\mbox{if } y\in U_v,\\
            \Bigl(\sum_{i=0}^{\hi{y}-\hi{v}}q^{-i}\Bigr)\Bigl(-\frac{1}{q}\Bigr) &\mbox{if } y\in \dot U_{p(v)}\setminus U_v.
        \end{dcases}
        \end{equation*}
        It is clear that $|(\Gamma_{p(v)}(v,\cdot))^H_{\hi{p(v)}}(y)|\le1$ for all $y\in \dot U_v$ and this easily implies that $(\Gamma_{p(v)}(v,\cdot))^H_{\hi{p(v)}}$ is in $\B{2}$.
    \end{enumerate}
\end{rem}

From now on, for $v\in\X$ and $n\in\N$, we will denote
\begin{align*}
    &\Gamma_{n,v}=\Gamma_{p^{n+1}(v)} (p^n(v),\cdot),\\
    &\Gamma_{n,v}^H=\Bigl(\Gamma_{p^{n+1}(v)} (p^n(v),\cdot)\Bigr)^H_{\hi{p^{n+1}(v)}}.
\end{align*}

\begin{lemma}\label{sp2}
    Let $v\in\X$ and $f\in\B{2}$. For every $n\in\N$ the following holds
    \begin{align*}
    \langle\, f,\,\Gamma_{n,v}^H \,\rangle_{\B{2}}
    =b_{\hi{p^{n+1}(v)}} \Bigl(f(p^n(v))-\frac{q+1}{q} f(p^{n+1}(v)) +\frac{1}{q} f(p^{n+2}(v))\Bigr).
    \end{align*}
    \begin{proof}
        Let $v\in\X$ and $f\in\B{2}$, by Lemma \ref{sp}
        \begin{align*}
            \langle\, f,\,\Gamma_{0,v}^H \,\rangle_{\B{2}}
            &=\sum_{y\in S(p(v))}\overline{\Gamma_{0,v}(y)}
            \Bigl(f(y) b_{\hi{p(v)}} -f(p(v)) b_{\hi{p(v)}}'\Bigr)\\
            &=\sum_{y\in S(p(v))}\overline{\Gamma_{0,v}(y)}
            f(y) b_{\hi{p(v)}} -f(p(v)) b_{\hi{p(v)}}'\sum_{y\in S(p(v))}\overline{\Gamma_{0,v}(y)}.
        \end{align*}
        We remind that $\Gamma_{0,v}\in Z_{p(v)}$ and so the second term vanishes. Using that $f$ is harmonic in $p(v)$, we obtain
        \begin{align*}
            \langle\, f,\,\Gamma_{0,v}^H \,\rangle_{\B{2}}
            &=\sum_{y\in S(p(v))}\overline{\Gamma_{0,v}(y)}
            f(y) b_{\hi{p(v)}}\\
            &=b_{\hi{p(v)}}\Bigl(\frac{q-1}{q}f(v)-\frac{1}{q}\sum_{y\in S(p(v))\setminus\{v\}}f(y)\Bigr)\\
            &=b_{\hi{p(v)}}\Bigl(\frac{q-1}{q}f(v)-\frac{q+1}{q}f(p(v))+\frac{1}{q}f(v)+\frac{1}{q}f(p^2(v))\Bigr)\\
            &=b_{\hi{p(v)}}\Bigl(f(v)-\frac{q+1}{q}f(p(v))+\frac{1}{q}f(p^2(v))\Bigr).
        \end{align*}
        Upon replacing $v$ with $p^n(v)$ we obtain the claim for every $n\in\N$.
    \end{proof}
\end{lemma}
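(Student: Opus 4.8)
The strategy is to recognize $\Gamma_{n,v}=\Gamma_{p^{n+1}(v)}(p^n(v),\cdot)$ as a function of exactly the type handled by Lemma \ref{sp}, and then to read off the claimed three-term expression from the explicit shape of $\Gamma$ together with the harmonicity of $f$. I would first treat the case $n=0$ and obtain the general case by the substitution $v\mapsto p^n(v)$.

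\emph{Step 1 (set-up for $n=0$).} Put $w:=p(v)$. By the fourth item of the remark above, $\Gamma_{0,v}=\Gamma_w(v,\cdot)$ is harmonic on $\HBo{\hi{w}}$, vanishes on $\HBo{\hi{w}+1}$ outside $S(w)$, and lies in $Z_w$, hence has zero mean on $S(w)$; by Example \ref{es} this also guarantees $\Gamma_{0,v}^H\in\B{2}$. Applying Lemma \ref{sp} with $y=w$ and with its index equal to $\hi{w}$ would then give
\begin{equation*}
\langle f,\Gamma_{0,v}^H\rangle_{\B{2}}
=\sum_{z\in S(w)}\overline{\Gamma_{0,v}(z)}\bigl(f(z)\,b_{\hi{w}}-f(w)\,b_{\hi{w}}'\bigr)
=b_{\hi{w}}\sum_{z\in S(w)}\overline{\Gamma_{0,v}(z)}\,f(z),
\end{equation*}
the $b_{\hi{w}}'$ term dropping out because it is multiplied by $\sum_{z\in S(w)}\Gamma_{0,v}(z)=0$.

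\emph{Step 2 (evaluate the sum).} Since $\Gamma_w$ is real valued and equals $\tfrac{q-1}{q}$ at $v$ and $-\tfrac1q$ at the other $q-1$ successors of $w$, the remaining sum is $\tfrac{q-1}{q}f(v)-\tfrac1q\sum_{z\in S(w)\setminus\{v\}}f(z)$. The harmonicity of $f$ at $w=p(v)$ reads $\sum_{z\in S(w)}f(z)=(q+1)f(p(v))-f(p^2(v))$, so $\sum_{z\in S(w)\setminus\{v\}}f(z)=(q+1)f(p(v))-f(p^2(v))-f(v)$; substituting, the coefficient of $f(v)$ collapses to $\tfrac{q-1}{q}+\tfrac1q=1$, and one arrives at
\begin{equation*}
\langle f,\Gamma_{0,v}^H\rangle_{\B{2}}
=b_{\hi{p(v)}}\Bigl(f(v)-\frac{q+1}{q}f(p(v))+\frac1q f(p^2(v))\Bigr),
\end{equation*}
which is the assertion for $n=0$.

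\emph{Step 3 (general $n$).} Replacing $v$ by $p^n(v)$ in the identity of Step 2 and using $p(p^n(v))=p^{n+1}(v)$, $p^2(p^n(v))=p^{n+2}(v)$, together with $\Gamma_{n,v}=\Gamma_{p^{n+1}(v)}(p^n(v),\cdot)$, yields the stated formula. I do not expect a genuine obstacle here: the delicate points are merely checking that the hypotheses of Lemma \ref{sp} hold for $\Gamma_{0,v}$ (this is already recorded in the remark), noticing that the $b'$-term vanishes by the zero-mean property of $\Gamma_{0,v}$, and keeping straight the bookkeeping of the $\pm$ values of $\Gamma$ and the harmonicity relation for $f$.
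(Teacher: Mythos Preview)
Your proposal is correct and follows essentially the same path as the paper's proof: apply Lemma \ref{sp} with $y=p(v)$ to compute $\langle f,\Gamma_{0,v}^H\rangle$, use the zero-mean property of $\Gamma_{0,v}\in Z_{p(v)}$ to kill the $b'$-term, evaluate the remaining sum via the explicit values of $\Gamma$ and the harmonicity of $f$ at $p(v)$, and then substitute $v\mapsto p^n(v)$. The only differences are cosmetic (your intermediate variable $w$ for $p(v)$).
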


\begin{thm}\label{rkv}
    For every $v\in\X$ the function
    \begin{equation*}
         \mathcal K_{\alpha,v}=\sum_{n=0}^{+\infty} d_{n,\hi{v}}^{\alpha}\Gamma_{n,v}^H
    \end{equation*}
    is the kernel associated to $v$, where
    \begin{equation*}
        d_{n,k}^{\alpha}
        :=\Bigl(\sum_{j=0}^n q^{-j}\Bigr)b_{k-n-1}^{-1}.
    \end{equation*}
\end{thm}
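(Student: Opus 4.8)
The plan is to verify directly that the function $\mathcal K_{\alpha,v}$ displayed in the statement lies in $\B{2}$ and reproduces point evaluation at $v$; since the representing vector of the bounded functional $f\mapsto f(v)$ is unique, this identifies it as the kernel. Throughout I will write $v_m:=p^m(v)$, so that $[v,\omega)=\{v_m\}_{m\ge0}$ and $\hi{v_m}=\hi{v}-m$, and I will set $c_n:=\sum_{j=0}^n q^{-j}$, a bounded increasing sequence with $c_n-c_{n-1}=q^{-n}$ and $c_n\to \tfrac{q}{q-1}$. The key algebraic observation, forced by the very definition of $d^{\alpha}_{n,k}$, is that $d^{\alpha}_{n,\hi{v}}\,b_{\hi{v}-n-1}=c_n$, because $\hi{p^{n+1}(v)}=\hi{v}-n-1$.

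First I would check that the series $\sum_{n\ge0}d^{\alpha}_{n,\hi{v}}\,\Gamma_{n,v}^H$ converges in $\B{2}$. Applying Lemma~\ref{sp2} with $f=\Gamma_{n,v}^H$, and using that $\Gamma_{n,v}^H$ is supported in $\dot U_{p^{n+1}(v)}$ — hence vanishes at $p^{n+1}(v)$ and $p^{n+2}(v)$ — together with the value $\Gamma_{n,v}^H(p^n(v))=\tfrac{q-1}{q}$ read off from the explicit formula for $(\Gamma_{p(u)}(u,\cdot))^H_{\hi{p(u)}}$ in the Remark (take $u=p^n(v)$ and evaluate at $u$ itself), one gets $\|\Gamma_{n,v}^H\|_{\B{2}}^2=\tfrac{q-1}{q}\,b_{\hi{v}-n-1}$. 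Since Lemma~\ref{sp} shows $b_k=q^{-\alpha k}B$ with $B:=\sum_{l\ge0}q^{-\alpha(l+1)}\bigl(\sum_{i=0}^l q^{-i}\bigr)\bigl(\sum_{j=0}^l q^j\bigr)\in(0,+\infty)$ (finite precisely because $\alpha>1$), and $c_n\le\tfrac{q}{q-1}$, an elementary estimate gives $\|d^{\alpha}_{n,\hi{v}}\,\Gamma_{n,v}^H\|_{\B{2}}\lesssim q^{-\alpha n/2}$, which is summable; so the series converges absolutely and $\mathcal K_{\alpha,v}\in\B{2}$. (Alternatively, the $\Gamma_{n,v}^H$ are mutually orthogonal — being, up to scalars, in the spans of $\{g_{p^{n+1}(v),j}\}_j$ over the distinct vertices $p^{n+1}(v)$, pairwise orthogonal by Theorem~\ref{cos} — so it would suffice to have $\sum_n|d^{\alpha}_{n,\hi{v}}|^2\|\Gamma_{n,v}^H\|^2<\infty$.)

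Next, for arbitrary $f\in\B{2}$, I would compute $\langle f,\mathcal K_{\alpha,v}\rangle_{\B{2}}$ as $\lim_{N\to\infty}\sum_{n=0}^N d^{\alpha}_{n,\hi{v}}\langle f,\Gamma_{n,v}^H\rangle_{\B{2}}$ — legitimate since the series converges in norm and the coefficients are real — and invoke Lemma~\ref{sp2} with $d^{\alpha}_{n,\hi{v}}b_{\hi{v}-n-1}=c_n$ to rewrite the $N$-th partial sum as
\[
\sum_{n=0}^N c_n\Bigl(f(v_n)-\tfrac{q+1}{q}f(v_{n+1})+\tfrac1q f(v_{n+2})\Bigr).
\]
Re-indexing and collecting the coefficient of each $f(v_m)$: that of $f(v_0)$ is $c_0=1$, that of $f(v_1)$ is $c_1-\tfrac{q+1}{q}c_0=0$, and for $2\le m\le N$ it is $c_m-\tfrac{q+1}{q}c_{m-1}+\tfrac1q c_{m-2}=(c_m-c_{m-1})-\tfrac1q(c_{m-1}-c_{m-2})=q^{-m}-q^{-m}=0$. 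Only the boundary terms at $v_{N+1}$ and $v_{N+2}$ remain, with coefficients bounded uniformly in $N$; and since $f\in\B{2}$ forces $f(v_m)\to0$ as $m\to\infty$ (as in the proof of Theorem~\ref{cos}: $\sigma_\alpha(v_m)=q^{-\alpha\hi{v}}q^{\alpha m}\to\infty$ while $\sum_m|f(v_m)|^2\sigma_\alpha(v_m)<\infty$), these terms vanish in the limit, giving $\langle f,\mathcal K_{\alpha,v}\rangle_{\B{2}}=f(v_0)=f(v)$, as wanted.

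The main obstacle is the bookkeeping in the telescoping step: one has to be sure both that the \emph{interior} coefficients cancel exactly — which is precisely what the formula for $d^{\alpha}_{n,k}$ is engineered to produce, encoding the three-term recursion underlying Lemma~\ref{sp2} — and that the \emph{boundary} terms left over at stage $N$ go to zero, which relies on the decay $f(v_m)\to 0$ along $[v,\omega)$ that is characteristic of $\B{2}$ functions. The convergence of the defining series is comparatively routine once $\|\Gamma_{n,v}^H\|_{\B{2}}$ has been identified.
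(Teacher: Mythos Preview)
Your proof is correct and, for the reproducing property, follows the same route as the paper: apply Lemma~\ref{sp2} term by term, use $d^{\alpha}_{n,\hi{v}}\,b_{\hi{v}-n-1}=c_n$, and telescope; the paper simply rearranges the resulting double series directly rather than tracking the boundary terms of the $N$-th partial sum as you do, but the content is identical.

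The one genuine difference is in establishing $\mathcal K_{\alpha,v}\in\B{2}$. The paper (Lemma~\ref{lemma.kib}) argues pointwise: it bounds $|(\mathcal K_{\alpha,v}-s_v^m)(x)|$ separately on $U_{p^{m+1}(v)}$ and on its complement (sliced by $\hi{x\wedge v}$), then integrates against $\sigma_\alpha$. Your argument is shorter and more structural: you extract $\|\Gamma_{n,v}^H\|_{\B{2}}^2=\tfrac{q-1}{q}\,b_{\hi{v}-n-1}$ directly from Lemma~\ref{sp2} (since $\Gamma_{n,v}^H$ vanishes at $p^{n+1}(v)$ and $p^{n+2}(v)$ and equals $\tfrac{q-1}{q}$ at $p^n(v)$), observe $b_k=q^{-\alpha k}B$, and conclude $\sum_n\|d^{\alpha}_{n,\hi{v}}\Gamma_{n,v}^H\|_{\B{2}}<\infty$. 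This avoids all the pointwise geometry and buys you absolute norm-convergence in one line; the paper's approach, on the other hand, yields explicit pointwise bounds on the tails that feed into the later estimates of Section~\ref{sec.br}.
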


Before giving the proof of this Theorem we show that $\mathcal K_{\alpha,v}$ belongs to $\B{2}$.

\begin{lemma}\label{lemma.kib}
    $\mathcal K_{\alpha,v}\in\B{2}$ for every $v\in\X$.
\end{lemma}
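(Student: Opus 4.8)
The plan is to show that the series defining $\mathcal K_{\alpha,v}$ converges \emph{absolutely} in $\B{2}$; since $\B{2}$ is complete, being a Hilbert space by Proposition~\ref{hs}, this gives $\mathcal K_{\alpha,v}\in\B{2}$ at once. Each summand $\Gamma_{n,v}^H=\bigl(\Gamma_{p^{n+1}(v)}(p^n(v),\cdot)\bigr)^H_{\hi{p^{n+1}(v)}}$ already lies in $\B{2}$: this is the harmonic extension below the $\hi{p^{n+1}(v)}$-th horoball of a function of the type considered in Example~\ref{es}, and it is recorded in the observations on $\Gamma_v$ (item~4) that such functions belong to $\B{2}$ and satisfy $|\Gamma_{n,v}^H(x)|\le 1$ with $\supp(\Gamma_{n,v}^H)\subseteq\dot U_{p^{n+1}(v)}$. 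So everything reduces to a summable bound for $d_{n,\hi{v}}^{\alpha}\,\|\Gamma_{n,v}^H\|_{\B{2}}$.

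For the norm I would use the crude estimate: since $|\Gamma_{n,v}^H|\le 1$ on $\dot U_{p^{n+1}(v)}\subseteq U_{p^{n+1}(v)}$ and $\hi{p^{n+1}(v)}=\hi{v}-n-1$, formula~\eqref{ms} gives
\begin{equation*}
    \|\Gamma_{n,v}^H\|_{\B{2}}^2
    \le \sigma_\alpha\bigl(U_{p^{n+1}(v)}\bigr)
    =\frac{q^{-\alpha(\hi{v}-n-1)}}{1-q^{1-\alpha}}.
\end{equation*}
For the coefficient, I would observe that the closed form for $b_{n,\alpha}$ in Lemma~\ref{sp} factors as $b_{n,\alpha}=q^{-\alpha n}b_{0,\alpha}$, where $b_{0,\alpha}$ is a finite strictly positive constant (the defining series converges precisely because $\alpha>1$). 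Hence, bounding $\sum_{j=0}^n q^{-j}\le\frac{q}{q-1}$,
\begin{equation*}
    d_{n,\hi{v}}^{\alpha}
    =\Bigl(\sum_{j=0}^n q^{-j}\Bigr)\,b_{\hi{v}-n-1}^{-1}
    \le \frac{q}{(q-1)\,b_{0,\alpha}}\;q^{\alpha(\hi{v}-n-1)}.
\end{equation*}

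Multiplying the two bounds, the exponents $\alpha(\hi{v}-n-1)$ combine to a single $\tfrac{\alpha}{2}(\hi{v}-n-1)$, so that
\begin{equation*}
    d_{n,\hi{v}}^{\alpha}\,\|\Gamma_{n,v}^H\|_{\B{2}}
    \;\lesssim\; q^{\frac{\alpha}{2}(\hi{v}-n-1)}
    = q^{\frac{\alpha}{2}(\hi{v}-1)}\,q^{-\frac{\alpha}{2}n},
\end{equation*}
with a constant depending only on $q$ and $\alpha$. Since $\alpha>1$, one has $\sum_{n\ge 0}q^{-\alpha n/2}<+\infty$, so $\sum_n d_{n,\hi{v}}^{\alpha}\,\Gamma_{n,v}^H$ is absolutely convergent in $\B{2}$ and therefore convergent, which yields $\mathcal K_{\alpha,v}\in\B{2}$. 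The only point requiring a moment's care is the identity $b_{n,\alpha}=q^{-\alpha n}b_{0,\alpha}$ with $b_{0,\alpha}$ finite and nonzero, but this is read off directly from the formula in Lemma~\ref{sp}; the rest is bookkeeping of geometric series. (Alternatively, using the reproducing property of $\Gamma_{p(v)}(v,\cdot)$ on $Z_{p(v)}$ together with the orthogonality in Theorem~\ref{cos} one checks that the $\Gamma_{n,v}^H$ are pairwise orthogonal and runs an $\ell^2$ argument instead, but absolute convergence is the shortest route.)
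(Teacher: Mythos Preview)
Your argument is correct and gives a shorter proof than the paper's. You bound each term in norm via the crude support estimate $\|\Gamma_{n,v}^H\|_{\B{2}}^2\le\sigma_\alpha(U_{p^{n+1}(v)})\simeq q^{-\alpha(\hi{v}-n-1)}$ and the factorisation $b_{n,\alpha}=q^{-\alpha n}b_{0,\alpha}$, obtaining a geometric tail $q^{-\alpha n/2}$ and hence absolute convergence in $\B{2}$. The paper instead works pointwise: it fixes the partial sum $s_v^m$, splits $\X$ into $U_{p^{m+1}(v)}$ and its complement, estimates $|(\mathcal K_{\alpha,v}-s_v^m)(x)|$ on each piece by summing the tail of the $d_{n,\hi{v}}^\alpha$'s (using $|\Gamma_{n,v}^H|\le1$ on the first piece and the location of $x\wedge v$ on the second), and only then integrates against $\sigma_\alpha$. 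Your route avoids the case analysis entirely and uses nothing beyond~\eqref{ms} and the explicit form of $b_{n,\alpha}$. The payoff of the paper's longer computation is the pointwise bound $|\mathcal K_{\alpha,v}(x)|\lesssim q^{\alpha\hi{x\wedge v}}$, which is essentially what gets recycled later in the proof of the H\"ormander condition (Proposition~\ref{prophorm}); your approach does not produce this as a by-product, but for the statement at hand it is not needed.
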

    \begin{proof}
        Define $d_{n,k}^{\alpha}$ as above and consider for every $m\in\N$ the partial sums given by
        \begin{equation*}
            s_v^m=\sum_{n=0}^{m}d_{n,\hi{v}}^{\alpha}\Gamma_{n,v}^H.
        \end{equation*}
        It is clear that $s_v^m\in\B{2}$ for every $m\in\N$ because it is a finite sum of functions in $\B{2}$. Our goal is to show that $\|\mathcal K_{\alpha,v}-s_v^m\|_{2,\alpha}\to0$. Let $x\in U_{p^{m+1}(v)}$,
        \begin{align*}
            |(\mathcal K_{\alpha,v}-s_{v}^m)(x)|
            &=\Bigl|\sum_{n=m+1}^{+\infty}d_{n,\hi{v}}^{\alpha}
            \Gamma_{n,v}^H(x)\Bigr|
            \le \sum_{n=m+1}^{+\infty} d_{n,\hi{v}}^{\alpha}
            \Bigl|\Gamma_{n,v}^H(x)\Bigr|\\
            &\le\sum_{n=m+1}^{+\infty} d_{n,\hi{v}}^{\alpha}
            \lesssim\sum_{n=m+1}^{+\infty}q^{\alpha(\hi{v}-n-1)}\\
            &\simeq q^{\alpha\hi{v}} \sum_{n=m+1}^{+\infty}q^{-\alpha n}
            \lesssim q^{\alpha(\hi{v}-m)}.
        \end{align*}
        Now let $x\notin U_{p^{m+1}(v)}$, in this case the first sector of the form $U_{p^n(v)}$ which contains $x$ is $U_{p^{\hi{v}-\hi{x\wedge v}}(v)}=U_{x\wedge v}$. It follows that $\Gamma_{n,v}^H(x)=0$ for every $n=m+1,\dots,\hi{v}-\hi{x\wedge v}+1$.
        As in the previous computation we obtain
        \begin{equation*}
            |(\mathcal K_{\alpha,v}-s_v^m)(x)|
            =\Bigl|\sum_{n=\hi{v}-\hi{x\wedge v}}^{+\infty}d_{n,\hi{v}}^\alpha
            \Gamma_{n,v}^H(x)\Bigr|
            \lesssim q^{\alpha \hi{x\wedge v}}.
        \end{equation*}
        Then we get
        \begin{align*}
            &\|\mathcal K_{\alpha,v}-s_v^m\|^2_{2,\alpha}
            =\sum_{x\in\X}|(\mathcal K_{\alpha,v}-s_v^m)(x)|^2\sigma_{\alpha}(x)\\
            &=\hspace{-0.2cm}\sum_{x\in U_{p^{m+1}(v)}}\hspace{-0.2cm}|(\mathcal K_{\alpha,v}-s_v^m)(x)|^2\sigma_{\alpha}(x)
            +\hspace{-0.2cm}\sum_{x\notin U_{p^{m+1}(v)}}\hspace{-0.1cm}|(\mathcal K_{\alpha,v}-s_v^m)(x)|^2\sigma_{\alpha}(x)\\
            &=\hspace{-0.2cm}\sum_{x\in U_{p^{m+1}(v)}}\hspace{-0.2cm}|(\mathcal K_{\alpha,v}-s_v^m)(x)|^2\sigma_{\alpha}(x)
            +\hspace{-0.2cm}\sum_{i=\hi{p^{m+2}(v)}}^{-\infty}\sum_{\hi{x\wedge v}=i}\hspace{-0.2cm}|(\mathcal K_{\alpha,v}-s_v^m)(x)|^2\sigma_{\alpha}(x)\\
            &\lesssim\sum_{x\in U_{p^{m+1}(v)}}q^{2\alpha(\hi{v}-m)}\sigma_{\alpha}(x)
            +\sum_{i=\hi{v}-m-2}^{-\infty}\sum_{\hi{x\wedge v}=i}q^{2\alpha\hi{x\wedge v}}\sigma_{\alpha}(x)\\
            &\lesssim q^{2\alpha(\hi{v}-m)}\sigma_{\alpha}(U_{p^{m+1}(v)})+\sum_{i=m+2-\hi{v}}^{+\infty} q^{-2\alpha i} \sigma_{\alpha}(\{x:\hi{x\wedge v}=-i\}).
        \end{align*}
        Notice that $\{x:\hi{x\wedge v}=-i\}=U_{p^{\hi{v}+i}(v)}\setminus U_{p^{\hi{v}+i-1}(v)}$, so that by \eqref{ms}
        \begin{align}\label{dm}
            \notag\sigma_{\alpha}(\{x:\hi{x\wedge v}=-i\})
            &=\frac{1}{1-q^{1-\alpha}}\left(q^{-\alpha\hi{p^{\hi{v}+i}(v)}}-q^{-\alpha\hi{p^{\hi{v}+i-1}(v)}}\right)\\
            \notag&=\frac{1}{1-q^{1-\alpha}}\left(q^{\alpha i}-q^{\alpha(i-1)}\right)
            =q^{\alpha i}\frac{1-q^{-\alpha}}{1-q^{1-\alpha}}
            \simeq q^{\alpha i}.
        \end{align}
        Therefore we obtain
        \begin{align*}
            \|\mathcal K_{\alpha,v}-s_v^m\|^2_{2,\alpha}&\lesssim q^{2\alpha(\hi{v}-m)}q^{-\alpha\hi{p^{m+1}(v)}}
            +\sum_{i=m+2-\hi{v}}^{+\infty} q^{-2\alpha i}q^{\alpha i}\\
            &\lesssim q^{\alpha(\hi{v}-m)}
            +\sum_{i=m+2-\hi{v}}^{+\infty} q^{-\alpha i}\xrightarrow[m \rightarrow +\infty]{}0.
        \end{align*}
        This proves that $\mathcal K_{\alpha,v}\in\B{2}$ since it is closed in $\Lda$.
    \end{proof}

    We are in a position to prove the main theorem.
    \begin{proof}[Proof of Theorem~\ref{rkv}]
        Let $f\in\B{2}$, we claim that
        \begin{equation*}
            f(v)=\langle \, f\,,\,\mathcal K_{\alpha,v} \rangle_{\B{2}}.
        \end{equation*}
        By Lemma \ref{sp2} we have
        \begin{align*}
            \langle \, f\,,\sum_{n=0}^{+\infty}d_{n,\hi{v}}^\alpha
            &\Gamma_{n,v}^H \rangle_{\B{2}}
            =\sum_{n=0}^{+\infty}d_{n,\hi{v}}^\alpha
            \langle \, f\,,\,
            \Gamma_{n,v}^H\rangle_{\B{2}}\\
            =&\sum_{n=0}^{+\infty}\Bigl(\sum_{j=0}^n q^{-j}\Bigr) \Bigl(f(p^n(v))-\frac{q+1}{q} f(p^{n+1}(v)) +\frac{1}{q} f(p^{n+2}(v))\Bigr)\\
            =&f(v)-\frac{q+1}{q}f(p(v))+\Bigl(1+\frac{1}{q}\Bigr)f(p(v))\\
            &+\sum_{m=2}^{+\infty} f(p^m(v))\Bigl(\sum_{j=0}^{m}q^{-j}-\frac{q+1}{q}\sum_{j=0}^{m-1}q^{-j}+\frac{1}{q}\sum_{j=0}^{m-2}q^{-j}\Bigr)\\
            =&f(v).
        \end{align*}
        Therefore $\mathcal K_{\alpha,v}$ has the reproducing property and it is the kernel associated to $v$.
    \end{proof}

Now we give an explicit formula for the kernel $\mathcal{K}_{\alpha}$ which exhibits its symmetry.

\begin{thm}\label{rkv2}
    The kernel $\mathcal{K}_{\alpha}(v,\cdot)=\mathcal K_{\alpha,v}$ may be rewritten as
    \begin{equation*}
        \mathcal{K}_{\alpha}(v,x)
        =\sum_{m=-\hi{x\wedge v}-1}^{+\infty} b_{-m-1}^{-1}
        \Bigl(\sum_{j=0}^{m+\hi{v}}q^{-j}\Bigr)\Bigl(\sum_{i=0}^{m+\hi{x}}q^{-i}\Bigr)
        \Gamma_{m+\hi{v},v}(p^{m+\hi{x}}(x)).
    \end{equation*}
    In particular, $\mathcal{K}_\alpha$ is real and symmetric.
\end{thm}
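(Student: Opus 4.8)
The plan is to rewrite the series of Theorem~\ref{rkv} by making the harmonic extensions $\Gamma_{n,v}^H$ explicit and then re‑indexing. First, applying the formula for $g^H_n$ recalled after Lemma~\ref{p1} to $g=\Gamma_{n,v}=\Gamma_{p^{n+1}(v)}(p^n(v),\cdot)$ — exactly as in the last item of the Remark on $\Gamma_v$, applied with $p^n(v)$ in place of $v$ — one obtains that for every $x\in\X$,
\begin{equation*}
    \Gamma_{n,v}^H(x)=\Bigl(\sum_{i=0}^{\hi x-\hi{p^n(v)}}q^{-i}\Bigr)\,\Gamma_{n,v}\bigl(p^{\hi x-\hi{p^n(v)}}(x)\bigr),
\end{equation*}
where by our conventions the right‑hand side is understood to be $0$ whenever $\hi x-\hi{p^n(v)}<0$ (the sum is then empty), and it is automatically $0$ whenever $p^{\hi x-\hi{p^n(v)}}(x)\notin\dot U_{p^{n+1}(v)}$, since $\Gamma_{n,v}$ vanishes outside $\dot U_{p^{n+1}(v)}$. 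Indeed, when $x\in\dot U_{p^{n+1}(v)}$ the vertex $z:=p^{\hi x-\hi{p^n(v)}}(x)$ is precisely the element of $S(p^{n+1}(v))$ whose sector contains $x$, and the three‑case descriptions of $\Gamma_{n,v}^H$ and of $\Gamma_{n,v}$ on $S(p^{n+1}(v))$ match up, so the displayed identity follows; when $x\notin\dot U_{p^{n+1}(v)}$ both sides vanish.

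Next I would set $m=n-\hi v$. Then $\hi{p^n(v)}=-m$, so $\hi x-\hi{p^n(v)}=m+\hi x$; moreover $p^{n+1}(v)=p^{m+\hi v+1}(v)\in\Ho{-m-1}$, $\Gamma_{n,v}=\Gamma_{m+\hi v,v}$, and $d_{n,\hi v}^{\alpha}=\bigl(\sum_{j=0}^{m+\hi v}q^{-j}\bigr)b_{-m-1}^{-1}$. Substituting into Theorem~\ref{rkv} yields
\begin{equation*}
    \mathcal K_{\alpha}(v,x)=\sum_{m=-\hi v}^{+\infty} b_{-m-1}^{-1}\Bigl(\sum_{j=0}^{m+\hi v}q^{-j}\Bigr)\Bigl(\sum_{i=0}^{m+\hi x}q^{-i}\Bigr)\Gamma_{m+\hi v,v}\bigl(p^{m+\hi x}(x)\bigr).
\end{equation*}
It then remains to replace the lower summation index $-\hi v$ with $-\hi{x\wedge v}-1$, which amounts to checking that the two ranges differ only by vanishing terms. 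If $v$ lies on $[x,\omega)$ then $\hi{x\wedge v}=\hi v$ and the new range adds only the term $m=-\hi v-1$, for which $\sum_{j=0}^{m+\hi v}q^{-j}=\sum_{j=0}^{-1}q^{-j}=0$. Otherwise $\hi{x\wedge v}<\hi v$, and for $-\hi v\le m\le -\hi{x\wedge v}-2$ the vertex $p^{m+\hi v+1}(v)$ lies in $\Ho{-m-1}$ with $-m-1>\hi{x\wedge v}$, hence it is not an ancestor of $x$; consequently $p^{m+\hi x}(x)\notin\dot U_{p^{m+\hi v+1}(v)}$ and $\Gamma_{m+\hi v,v}$ vanishes at that point, so these terms can be dropped. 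This gives the stated formula.

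Finally, for the symmetry and reality, fix $m\ge-\hi{x\wedge v}-1$ with $m+\hi v\ge 0$ and $m+\hi x\ge 0$ (for all other $m$ in the range the corresponding term vanishes, since one of the two inner sums is empty, so no negative power of $p$ is ever needed with a nonzero coefficient). Then the ancestors of $x$ and of $v$ at level $-m-1$ coincide, say $w_m:=p^{m+\hi v+1}(v)=p^{m+\hi x+1}(x)$, and both $p^{m+\hi v}(v)$ and $p^{m+\hi x}(x)$ belong to $S(w_m)$. By the symmetry of $\Gamma_{w_m}$ noted in the Remark on $\Gamma_v$,
\begin{equation*}
    \Gamma_{m+\hi v,v}\bigl(p^{m+\hi x}(x)\bigr)=\Gamma_{w_m}\bigl(p^{m+\hi v}(v),p^{m+\hi x}(x)\bigr)=\Gamma_{w_m}\bigl(p^{m+\hi x}(x),p^{m+\hi v}(v)\bigr)=\Gamma_{m+\hi x,x}\bigl(p^{m+\hi v}(v)\bigr).
\end{equation*}
Since the lower index $-\hi{x\wedge v}-1$ and the product $\bigl(\sum_{j=0}^{m+\hi v}q^{-j}\bigr)\bigl(\sum_{i=0}^{m+\hi x}q^{-i}\bigr)$ are invariant under interchanging $x$ and $v$, this proves $\mathcal K_{\alpha}(v,x)=\mathcal K_{\alpha}(x,v)$; reality is immediate because $q^{-j}$, $b_{-m-1}^{-1}>0$ and the values of $\Gamma_{w_m}$, which lie in $\{0,\tfrac{q-1}{q},-\tfrac1q\}$, are all real. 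The delicate point throughout is the bookkeeping of horocyclic indices and of the powers of $p$: one has to verify, according to the relative positions of $x$, $v$ and $x\wedge v$, that the terms claimed to vanish really do, and that the empty‑sum convention always kills the terms where a nonnegative power of $p$ would not be available.
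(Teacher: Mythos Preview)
Your argument is correct and follows essentially the same route as the paper: start from the series in Theorem~\ref{rkv}, write $\Gamma_{n,v}^H(x)$ via the harmonic‑extension formula, and perform the change of index $m=n-\hi v$. The only cosmetic difference is the order: the paper first discards the vanishing terms (observing that $\Gamma_{n,v}^H(x)=0$ unless $x\in\dot U_{p^{n+1}(v)}$, whence $n\ge\hi v-\hi{x\wedge v}-1$) and then substitutes, while you substitute first and then adjust the lower index; you are also more explicit than the paper about the case $x\wedge v=v$ and about the symmetry via $\Gamma_{w_m}(\cdot,\cdot)=\Gamma_{w_m}(\cdot,\cdot)^T$, which the paper leaves to the reader.
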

    \begin{proof}
        By Theorem \ref{rkv}
         \begin{equation*}
            \mathcal{K}_{\alpha}(v,x)=\mathcal K_{\alpha,v}(x)=\sum_{n=0}^{+\infty} d_{n,\hi{v}}^{\alpha}
            \Gamma_{n,v}^H(x).
        \end{equation*}
        Recall that $\Gamma_{n,v}^H(x)\ne 0$ if and only if $x\in \dot U_{p^{n+1}(v)}$. The smaller $n\in\N$ such that $x\in \dot U_{p^{n+1}(v)}$ is given by $p^{n+1}(v)=x\wedge v$. It follows that $\hi{v}-n-1=\hi{x\wedge v}$, thus $n=\hi{v}-\hi{x\wedge v}-1$. We obtain
        \begin{align*}
            \mathcal{K}_{\alpha}(v,x)
            &=\sum_{n=\hi{v}-\hi{x\wedge v}-1}^{+\infty} d_{n,\hi{v}}^{\alpha}
            \Gamma_{n,v}^H(x)\\
            &=\sum_{n=\hi{v}-\hi{x\wedge v}-1}^{+\infty} d_{n,\hi{v}}^{\alpha}
            \Bigl(\sum_{i=0}^{\hi{x}-\hi{p^{n+1}(v)}-1} q^{-i}\Bigr)
            \Gamma_{n,v}(p^{\hi{x}-\hi{p^{n+1}(v)}-1}(x))\\
            &=\sum_{n=\hi{v}-\hi{x\wedge v}-1}^{+\infty} d_{n,\hi{v}}^{\alpha}
            \Bigl(\sum_{i=0}^{\hi{x}-\hi{v}+n} q^{-i}\Bigr)
            \Gamma_{n,v}(p^{\hi{x}-\hi{v}+n}(x)).
        \end{align*}
        Now we put $m=n-\hi{v}$ and notice that
        \begin{equation*}
            d_{m+\hi{v},\hi{v}}^{\alpha}=\Bigl(\sum_{j=0}^{m+\hi{v}}q^{-j}\Bigr) b_{-m-1}^{-1}.
        \end{equation*}
        In particular we have that
        \begin{equation*}
            p^{n+1}(v)=p^{m+1+\hi{x\wedge v}}(x\wedge v)
        \end{equation*}
        since $\hi{p^{n+1}(v)}=\hi{v}-n-1=-m-1=\hi{p^{m+1+\hi{x\wedge v}}(x\wedge v)}$ and both $p^{n+1}(v)$ and $p^{m+1+\hi{x\wedge v}}(x\wedge v)$ belong to $[v,\omega)$. It follows that
        \begin{align*}
            \mathcal{K}_{\alpha}(v,x)
            =\sum_{m=-\hi{x\wedge v}-1}^{+\infty} & b_{-m-1}^{-1} \Bigl(\sum_{j=0}^{m+\hi{v}}q^{-j}\Bigr)
            \Bigl(\sum_{i=0}^{m+\hi{x}} q^{-i}\Bigr)\\
            &\Gamma_{p^{m+1+\hi{x\wedge v}}(x\wedge v)}(p^{m+\hi{v}}(v),p^{m+\hi{x}}(x)).
        \end{align*}
        It is clear that $p^{m+1+\hi{x\wedge v}}(x\wedge v)=p^{m+1+\hi{v}}(v)$ so that
        \begin{equation*}
        \Gamma_{m+\hi{v},v}(p^{m+\hi{x}}(x))=
        \Gamma_{p^{m+1+\hi{x\wedge v}}(x\wedge v)}(p^{m+\hi{v}}(v),p^{m+\hi{x}}(x))
        \end{equation*}
        and the rest of the statement follows.
    \end{proof}

\begin{rem}
    A standard way to find the reproducing kernel $\mathcal K$ for a RKHS when an orthonormal basis $\mathcal B$ is already known exploits the following weak equality
    \begin{equation*}
        \mathcal{K}_{v}=\sum_{g\in\mathcal{B}} \langle \mathcal{K}_{v},g\rangle g=\sum_{g\in\mathcal{B}} g(v)g.
    \end{equation*}
    In our case this gives
    \begin{equation}\label{eqoss}
        \mathcal{K}_{\alpha,v}=\sum_{g\in\mathcal{S}_\alpha} g(v)g
        =\sum_{n=0}^{+\infty}b_{|v|-n}^{-1}\sum_{j=1}^{q-1}g_{p^n(v),j}(v)g_{p^n(v),j}.
    \end{equation}
    It is possible to see that the right hand side of \eqref{eqoss} converges in $L^2$ to the kernel as written in Theorem~\ref{rkv2}. We omit the explicit computation which turns out to be cumbersome.
\end{rem}

\section{Boundedness of integral operators}\label{sec.br}

The main goal of this last section is to provide boundedness results for integral operators, with a particular focus on the boundedness properties of the horocyclic Bergman projection.

By Proposition \ref{hs} the horocyclic harmonic Bergman space $\B{2}$ is closed in $L^2_\alpha$. Hence it is natural to consider the orthogonal projection $P_{\alpha}$ of $L^2_\alpha$ onto $\B{2}$, that is clearly given by
\begin{equation*}
    P_{\alpha}f=\sum_{x\in\X} \mathcal{K}_{\alpha}(\cdot,x)f(x)\sigma_{\alpha}(x), \quad f\in L^2_\alpha.
\end{equation*}
We refer to it as the \textit{horocyclic Bergman projection}. The aim is to prove that the extension of $P_{\alpha}$ to $L^p_\alpha$ is bounded for all $p>1$ and is of weak type $(1,1)$. Furthermore, we obtain boundedness limiting results involving $H^1_{\alpha}$ and $BMO_{\alpha}$.

Recall that by Proposition \ref{gmd} the measure metric space $(\X,\rho,\sigma_{\alpha})$ is doubling. Thus, by classical results~\cite{m}, a Calder\'on-Zygmund decomposition holds for functions in $L^1_\alpha$ and is based on the sets of the family $\mathcal D$ introduced in Section~\ref{subs.hbmo}.
\begin{prop}
    Let $f\in L^1(\X,\sigma_\alpha)$ and $\lambda>0$. There exist a family of disjoint sets $D_j\in\mathcal{D}$ and functions $g,b_j$ such that
    \begin{enumerate}[i)]
        \item $|f(x)|\le \lambda$ for all $\displaystyle x\in\X\setminus \bigsqcup_{j} D_j$;
        \item $\displaystyle f=g+\sum_{j}b_j$;
        \item $\|g\|_{2,\alpha}^2\lesssim \lambda\|f\|_{1,\alpha}$;
        \item every function $b_j$ is supported in $D_j$ and is such that
        \begin{equation*}
            \sum_{x\in D_j}b_j(x)q^{-\alpha\hi{x}}=0 \qquad\text{ and } \qquad\sum_{j}\|b_j\|_{1,\alpha}\lesssim \|f\|_{1,\alpha}.
        \end{equation*}
    \end{enumerate}
\end{prop}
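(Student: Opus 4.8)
The plan is to run the classical dyadic Calder\'on--Zygmund stopping-time argument, using the family $\mathcal{D}$ graded by the partitions $\mathcal{D}_k$ of Section~\ref{subs.hbmo} in place of dyadic cubes. For $x\in\X$ and $k\in\Z$ let $D_k(x)$ denote the unique element of $\mathcal{D}_k$ containing $x$, and write $f_D$ for the $\sigma_\alpha$-mean of $f$ over $D\in\mathcal{D}$ as in Section~\ref{subs.hbmo}. The one point where the non-finiteness of $\sigma_\alpha$ must be accounted for is the following: although $\sigma_\alpha(\X)=+\infty$, the averages of $|f|$ over $D_k(x)$ tend to $0$ as $k\to-\infty$. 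Indeed, for $k\le\hi{x}$ one has $D_k(x)=U_{p^{\hi{x}-k}(x)}$, so \eqref{ms} gives $\sigma_\alpha(D_k(x))=\frac{q^{-\alpha k}}{1-q^{1-\alpha}}\to+\infty$, whence $\frac{1}{\sigma_\alpha(D_k(x))}\sum_{y\in D_k(x)}|f(y)|q^{-\alpha\hi{y}}\le\frac{\|f\|_{1,\alpha}}{\sigma_\alpha(D_k(x))}\to0$; on the other hand, for $k>\hi{x}$ we have $D_k(x)=\{x\}$, so the average there equals $|f(x)|$.

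First I would perform the stopping time: let $\{D_j\}_j$ be the elements of $\{D\in\mathcal{D}:\ |f|_D>\lambda\}$ that are maximal with respect to inclusion. They exist because the sets of $\mathcal{D}$ containing a fixed vertex form a chain $\{x\}\subsetneq U_x\subsetneq U_{p(x)}\subsetneq\cdots$ along which the averages of $|f|$ decrease to $0$, and they are pairwise disjoint since any two elements of $\mathcal{D}$ are nested or disjoint (as follows from the nesting of the partitions $\mathcal{D}_k$) and maximality rules out proper inclusions among the $D_j$. Property (i) is then immediate: if $x\notin\bigsqcup_j D_j$ then $\{x\}$ does not belong to the family, i.e. $|f(x)|\le\lambda$. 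For (ii) and (iv) I would set $b_j=(f-f_{D_j})\mathbf{1}_{D_j}$ and $g=f-\sum_j b_j$, so that $b_j$ is supported in $D_j$, the identity $\sum_{x\in D_j}b_j(x)q^{-\alpha\hi{x}}=0$ holds by definition of $f_{D_j}$, and, using disjointness of the $D_j$, $\sum_j\|b_j\|_{1,\alpha}\le\sum_j\big(\|f\mathbf{1}_{D_j}\|_{1,\alpha}+|f_{D_j}|\,\sigma_\alpha(D_j)\big)\le2\sum_j\|f\mathbf{1}_{D_j}\|_{1,\alpha}\le2\|f\|_{1,\alpha}$.

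The heart of the matter is (iii), which is where doubling enters, through the comparability of each stopping set with its parent in $\mathcal{D}$. Each $D_j$ is either a sector $U_y$, whose parent is $U_{p(y)}$, or a singleton $\{y\}$, whose smallest strict $\mathcal{D}$-superset is $U_y$; in either case \eqref{ms} shows that the parent $\widehat{D_j}$ satisfies $\sigma_\alpha(\widehat{D_j})\le k_\alpha\,\sigma_\alpha(D_j)$ with $k_\alpha$ the doubling constant of Proposition~\ref{gmd}, while maximality of $D_j$ forces $|f|_{\widehat{D_j}}\le\lambda$. Hence $\sum_{x\in D_j}|f(x)|q^{-\alpha\hi{x}}\le\sum_{x\in\widehat{D_j}}|f(x)|q^{-\alpha\hi{x}}\le\lambda\,\sigma_\alpha(\widehat{D_j})\le k_\alpha\lambda\,\sigma_\alpha(D_j)$, so that $|f_{D_j}|\le k_\alpha\lambda$. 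To conclude I would split $\|g\|_{2,\alpha}^2$ over $\X\setminus\bigsqcup_j D_j$ and the $D_j$: on the complement $|g|=|f|\le\lambda$ by (i), giving $\sum_{x\in\X\setminus\bigsqcup_j D_j}|g(x)|^2q^{-\alpha\hi{x}}\le\lambda\|f\|_{1,\alpha}$; on each $D_j$, where $g\equiv f_{D_j}$, we have $\sum_{x\in D_j}|g(x)|^2q^{-\alpha\hi{x}}=|f_{D_j}|^2\sigma_\alpha(D_j)\le k_\alpha\lambda\,|f_{D_j}|\,\sigma_\alpha(D_j)\le k_\alpha\lambda\sum_{x\in D_j}|f(x)|q^{-\alpha\hi{x}}$, and summing over $j$ and adding the two contributions yields $\|g\|_{2,\alpha}^2\lesssim\lambda\|f\|_{1,\alpha}$. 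The only genuine subtlety, as anticipated, is the bookkeeping forced by $\sigma_\alpha(\X)=+\infty$, which is precisely what the preliminary decay estimate handles; apart from that the argument is the textbook dyadic one, and since $(\X,\rho,\sigma_\alpha)$ is doubling one could alternatively invoke the general construction in~\cite{m}.
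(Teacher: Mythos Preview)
Your argument is correct and is precisely the classical dyadic stopping-time construction. The paper does not actually prove this proposition: it simply states that, since $(\X,\rho,\sigma_\alpha)$ is doubling, the Calder\'on--Zygmund decomposition follows from the general theory in~\cite{m}. Your write-up spells out that construction in the specific dyadic structure $\mathcal{D}$, including the one extra observation needed here (the decay of the averages as $k\to-\infty$, to compensate for $\sigma_\alpha(\X)=+\infty$), so it is a strictly more detailed version of what the paper invokes.
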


In a doubling metric space $(X,d,\mu)$ the standard integral H\"ormander's condition for a kernel $K:X\times X\to \C$ is given by
\begin{equation*}
    \sup_{v\in\X,r>0}\,\sup_{x,y\in B_d(v,r)}\int_{X\setminus B_d(v,2r)}|K(z,x)-K(z,y)|\mu(z)<+\infty.
\end{equation*}
In our setting we have $X=\X$, $d=\rho$ and $\mu=\sigma_{\alpha}$ for some $\alpha>1$ and $(\X,\rho,\sigma_{\alpha})$ is doubling. Recall that by Remark \ref{gb} the Gromov balls are
\begin{equation*}
    B_{\rho}(x,r)=
    \begin{dcases}
    \{x\} &\mbox{if } \lfloor -\log(r)\rfloor>\hi{x},\\
    U_{p^{\hi{x}-\lfloor -\log(r)\rfloor}(x)} &\mbox{if } \lfloor -\log(r)\rfloor\le\hi{x}.
    \end{dcases}
\end{equation*}
It easily follows that the H\"ormander's condition for a kernel $\mathcal{K}\colon \X\times\X\to\C$ reads
\begin{equation}\label{Hc}
    \sup_{v\in\X}\,\sup_{x,y\in U_v} \sum_{z\in\X\setminus U_v} |\mathcal{K}(z,x)-\mathcal{K}(z,y)| \sigma_{\alpha}(z)<+\infty.
\end{equation}

The following theorem gathers some classical results concerning boundedness of integral operators on spaces of homogeneous type.  These results are mainly based on the doubling property and on the Calder\'on-Zygmund decomposition. The proof of $i)$ and $iii)$ is an easy adaptation to our setting of the proof of Theorem 3,~Ch.I~\cite{s} (see also~\cite{h}), while $ii)$ and $iv)$ follow from Theorem 8.2,~\cite{cmm}.

\begin{thm}\label{thmbound}
    Let $\mathcal T$ be an integral operator with kernel $\mathcal K\colon \X\times\X\to\C$. If $\mathcal T$ is bounded on $L^2_\alpha$ and $\mathcal K$ satisfies~\eqref{Hc}, then:
    \begin{enumerate}[i)]
        \item $\mathcal{T}$ is of weak type $(1,1)$ and by interpolation it is bounded on $L^p_{\alpha}$ for all $1<p<2$;
        \item $\mathcal{T}$ is bounded from $H^1_\alpha$ to $L^1_\alpha$.
    \end{enumerate}
    Furthermore, if the dual kernel $\mathcal K^* (x,y)=\overline{\mathcal K (y,x)}$ satisfies~\eqref{Hc}, then:
    \begin{enumerate}
        \item[iii)] $\mathcal{T}$ is bounded on $L^p_{\alpha}$ for all $2<p<\infty$;
        \item[iv)] $\mathcal{T}$ is bounded from $L^\infty_\alpha$ to $BMO_\alpha$.
    \end{enumerate}
\end{thm}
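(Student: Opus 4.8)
The plan is to prove $i)$ and $ii)$ directly, by running the classical Calder\'on--Zygmund scheme with respect to the dyadic family $\mathcal D$, and then to deduce $iii)$ and $iv)$ by duality from the corresponding statements for the adjoint operator.

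For $i)$, I would fix $f\in L^1_\alpha$ and $\lambda>0$, apply the Calder\'on--Zygmund decomposition of the preceding Proposition to obtain $f=g+\sum_j b_j$ with exceptional set $\Omega=\bigsqcup_j D_j$, and recall that the standard construction also gives $\sigma_\alpha(\Omega)\lesssim\lambda^{-1}\|f\|_{1,\alpha}$. Since the $D_j\in\mathcal D$ are pairwise disjoint, each is a sector $U_{v_j}$ (the singleton case is vacuous, as a function with vanishing $\sigma_\alpha$-mean on a point is zero). Writing $|\{|\mathcal T f|>2\lambda\}|\le|\{|\mathcal T g|>\lambda\}|+|\{|\mathcal T b|>\lambda\}|$, the good part is controlled by the $L^2_\alpha$-boundedness of $\mathcal T$, Chebyshev's inequality and $\|g\|_{2,\alpha}^2\lesssim\lambda\|f\|_{1,\alpha}$. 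For the bad part, on $\X\setminus\Omega$ one has $z\notin U_{v_j}$ for every $j$, so exploiting the vanishing mean of $b_j$ one may write $\mathcal T b_j(z)=\sum_{x\in D_j}\bigl(\mathcal K(z,x)-\mathcal K(z,x_j)\bigr)b_j(x)\sigma_\alpha(x)$ for a fixed $x_j\in D_j$; summing in $z$ over $\X\setminus U_{v_j}$ and invoking \eqref{Hc} with enclosing ball $U_{v_j}$ yields $\sum_{z\notin U_{v_j}}|\mathcal T b_j(z)|\sigma_\alpha(z)\lesssim\|b_j\|_{1,\alpha}$, and adding over $j$, using $\sum_j\|b_j\|_{1,\alpha}\lesssim\|f\|_{1,\alpha}$ and Chebyshev, bounds $|\{z\notin\Omega:|\mathcal T b(z)|>\lambda\}|$, while the portion inside $\Omega$ is absorbed in $\sigma_\alpha(\Omega)$. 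This establishes the weak type $(1,1)$ bound, and Marcinkiewicz interpolation with the $L^2_\alpha$ bound gives boundedness on $L^p_\alpha$ for $1<p<2$.

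For $ii)$, using $H^1_\alpha=H^{1,2}_\alpha$ with equivalent norms, it suffices to bound $\|\mathcal T a\|_{1,\alpha}$ uniformly over $(1,2)$-atoms $a$ supported in some $D=U_v\in\mathcal D$ and then extend by density, using once more that $\mathcal T$ is bounded on $L^2_\alpha$, as in Theorem 8.2 of \cite{cmm}. One splits $\|\mathcal T a\|_{1,\alpha}$ into the sums over $z\in U_v$ and $z\notin U_v$: on $U_v$, Cauchy--Schwarz, the $L^2_\alpha$-boundedness and the normalisation $\|a\|_{2,\alpha}\le\sigma_\alpha(U_v)^{-1/2}$ give $\sum_{z\in U_v}|\mathcal T a(z)|\sigma_\alpha(z)\le\sigma_\alpha(U_v)^{1/2}\|\mathcal T a\|_{2,\alpha}\lesssim1$; off $U_v$, the same vanishing-mean manipulation as above combined with \eqref{Hc} and $\|a\|_{1,\alpha}\le1$ gives $\sum_{z\notin U_v}|\mathcal T a(z)|\sigma_\alpha(z)\lesssim1$.

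Finally, $iii)$ and $iv)$ follow by duality applied to the adjoint $\mathcal T^*$, whose kernel is $\mathcal K^*$: by hypothesis $\mathcal K^*$ satisfies \eqref{Hc} and $\mathcal T^*$ is bounded on $L^2_\alpha$, so parts $i)$ and $ii)$ apply to $\mathcal T^*$; dualising the bound of $\mathcal T^*$ on $L^p_\alpha$, $1<p<2$, gives $iii)$, and dualising $\mathcal T^*\colon H^1_\alpha\to L^1_\alpha$, in view of $(H^1_\alpha)^*=BMO_\alpha$ (Coifman--Weiss, Theorem B \cite{cw}), gives $iv)$. The step I expect to require the most care is the bad-part estimate in $i)$: one must check that, since the Calder\'on--Zygmund sets are themselves Gromov balls, the exclusion region appearing in \eqref{Hc} matches precisely the sets produced by the decomposition, so that no dilation of the cubes and no further use of the doubling property is required beyond what is already built into the decomposition and the $L^2_\alpha$ theory; the remaining point is the routine verification that the extensions of $\mathcal T$ to $H^1_\alpha$ and to $L^\infty_\alpha$ are consistent with its definition on $L^2_\alpha$.
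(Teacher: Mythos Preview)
Your proposal is correct and follows exactly the classical Calder\'on--Zygmund scheme that the paper invokes: the paper does not give a self-contained proof of this theorem but simply refers to Theorem~3, Ch.~I of \cite{s} for $i)$ and $iii)$ and to Theorem~8.2 of \cite{cmm} for $ii)$ and $iv)$, and what you have written is precisely the skeleton of those arguments specialised to $(\X,\rho,\sigma_\alpha)$. The only small addition worth flagging is that the bound $\sigma_\alpha(\Omega)\lesssim\lambda^{-1}\|f\|_{1,\alpha}$ on the exceptional set, which you use in $i)$, is not listed among the conclusions of the Calder\'on--Zygmund proposition as stated in the paper, so in a fully written proof you would need to recall that it is part of the standard construction.
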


Now we show that the Bergman kernel $\mathcal K_\alpha$ satisfies the assumptions of the previous theorem.

\begin{prop}\label{prophorm}
    The Bergman kernel $\mathcal{K}_{\alpha}$ of $\B{2}$ satisfies integral H\"ormander's condition \eqref{Hc} for $\sigma_{\alpha}$.
    \begin{proof}
        We start by proving \eqref{Hc} with $y=v$. In this case, for every $x\in U_v$ and $z\notin U_v$ we have that $x\wedge z=v\wedge z$. Furthermore,
        \begin{equation}\label{eq.predug}
            p^{m+\hi{x}}(x)=p^{m+\hi{v}}(v)
        \end{equation}
        for every $m\ge-\hi{v}$ since they both have horocyclic index $-m$ and lie on $[v,\omega)$. It is clear that $\hi{v}\ge\hi{v\wedge z}+1$ because $z\notin U_v$, thus \eqref{eq.predug} holds true in particular for all $m\ge-\hi{v\wedge z}-1$.
        Hence for all $m\ge-\hi{v\wedge z}-1$
        \begin{equation*}
            \Gamma_{m+\hi{z},z}(p^{m+\hi{x}}(x))
            =\Gamma_{m+\hi{z},z}(p^{m+\hi{v}}(v))
        \end{equation*}
        and by Theorem \ref{rkv2}
        \begin{align*}
            |\mathcal{K}_{\alpha}&(z,x)-\mathcal{K}_{\alpha}(z,v)|\\
            &\le \sum_{m=-\hi{v\wedge z}-1}^{+\infty} b_{-m-1}^{-1}
            \Bigl( \sum_{j=0}^{m+\hi{z}} q^{-j} \Bigr)
            \Bigl( \sum_{i=m+1+\hi{v}}^{m+\hi{x}} q^{-i} \Bigr)
            \left|\Gamma_{m+\hi{z},z}(p^{m+\hi{v}}(v))\right|\\
            &\le \sum_{m=-\hi{v\wedge z}-1}^{+\infty} b_{-m-1}^{-1}
            \Bigl( \sum_{j=0}^{+\infty} q^{-j} \Bigr)
            \Bigl( \sum_{i=m+1+\hi{v}}^{+\infty} q^{-i} \Bigr)\\
            &\lesssim \sum_{m=-\hi{v\wedge z}-1}^{+\infty} q^{-\alpha m} q^{-m-\hi{v}}.
        \end{align*}
        Then we have
        \begin{align*}
            \sum_{z\in\X\setminus U_v}|\mathcal{K}_{\alpha}(z,x)&-\mathcal{K}_{\alpha}(z,v)|\sigma_{\alpha}(z)
            \lesssim q^{-\hi{v}}\sum_{z\in\X\setminus U_v} q^{-\alpha\hi{z}} \sum_{m=-\hi{v\wedge z}-1}^{+\infty} q^{-m(\alpha+1)}\\
            &\simeq q^{-\hi{v}} \sum_{z\in\X\setminus U_v} q^{-\alpha\hi{z}} q^{(\alpha+1)\hi{v\wedge z}}\\
            &=q^{-\hi{v}} \sum_{i=-\hi{v}+1}^{+\infty} \sum_{\hi{v\wedge z}=-i}q^{-\alpha\hi{z}} q^{-(\alpha+1)i}\\
            &=q^{-\hi{v}} \sum_{i=-\hi{v}+1}^{+\infty} q^{-(\alpha+1)i}\sigma_{\alpha}(\{z:\hi{v\wedge z}=-i\})\\
            &\simeq q^{-\hi{v}} \sum_{i=-\hi{v}+1}^{+\infty} q^{-(\alpha+1)i}q^{\alpha i}
            \simeq q^{-\hi{v}} q^{\hi{v}}=1,
        \end{align*}
        where the last line follows by the computation in the proof of Lemma \ref{lemma.kib}.
        It follows that
        \begin{equation*}
            \sup_{v\in\X}\,\sup_{x\in U_v} \sum_{z\in\X\setminus U_v} |\mathcal{K}_{\alpha}(z,x)-\mathcal{K}_{\alpha}(z,v)| \sigma_{\alpha}(z)<+\infty,
        \end{equation*}
        that is \eqref{Hc} holds in the case $y=v$. The general case easily follows by the triangular inequality.
    \end{proof}
\end{prop}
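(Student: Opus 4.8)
The plan is to verify the Hörmander condition \eqref{Hc} directly from the explicit symmetric expression for $\mathcal K_\alpha$ obtained in Theorem~\ref{rkv2}. A first reduction: by the triangle inequality it is enough to bound
\begin{equation*}
    \sum_{z\in\X\setminus U_v}|\mathcal K_\alpha(z,x)-\mathcal K_\alpha(z,v)|\,\sigma_\alpha(z)
\end{equation*}
uniformly in $v\in\X$ and $x\in U_v$, where $v$ is the generator of the ball $U_v$; for general $x,y\in U_v$ one then inserts the term $\mathcal K_\alpha(z,v)$ and adds the two resulting bounds. So one may always take the second argument of the difference to be the generator.

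The core step is an exact algebraic simplification of $\mathcal K_\alpha(z,x)-\mathcal K_\alpha(z,v)$ for $z\notin U_v$ and $x\in U_v$. I would first record the underlying geometry: since $x\in U_v$ we have $[v,\omega)\subseteq[x,\omega)$, and since $z\notin U_v$ the confluent $v\wedge z$ lies strictly above $v$, so that $\hi{v\wedge z}\le\hi{v}-1$; it follows that $x\wedge z=v\wedge z$ and that, for every $m\ge-\hi{v}$ — in particular for every $m\ge-\hi{v\wedge z}-1$, which is the relevant summation range — the vertices $p^{m+\hi{x}}(x)$ and $p^{m+\hi{v}}(v)$ coincide, both being the vertex of horocyclic index $-m$ on $[v,\omega)$. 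Writing out the formula of Theorem~\ref{rkv2} (with first variable $z$) for both kernels, the two series then start at the same index $-\hi{v\wedge z}-1$ and have matching $\Gamma$-factors term by term, so the difference collapses into a single series whose general term carries the factor $\sum_{i=m+1+\hi{v}}^{m+\hi{x}}q^{-i}$ in place of the two full sums $\sum_{i=0}^{m+\hi{x}}q^{-i}$ and $\sum_{i=0}^{m+\hi{v}}q^{-i}$.

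From there the estimates are routine. One uses $|\Gamma_{\,\cdot\,}|\le 1$, $\sum_{j\ge0}q^{-j}\lesssim 1$, the tail estimate $\sum_{i\ge m+1+\hi{v}}q^{-i}\simeq q^{-m-\hi{v}}$, and $b_{-m-1}^{-1}\simeq q^{-\alpha m}$, the implied constant coming from the series defining $b_n$, which converges precisely because $\alpha>1$ and which gives $b_n\simeq q^{-\alpha n}$. Summing the resulting geometric series in $m$ yields $|\mathcal K_\alpha(z,x)-\mathcal K_\alpha(z,v)|\lesssim q^{-\hi{v}}q^{(\alpha+1)\hi{v\wedge z}}$. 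Finally one sums over $z\notin U_v$ by grouping vertices according to the value $\hi{v\wedge z}\in\{\hi{v}-1,\hi{v}-2,\dots\}$ and invoking $\sigma_\alpha(\{z:\hi{v\wedge z}=-i\})\simeq q^{\alpha i}$, which is exactly the computation \eqref{dm} already carried out in the proof of Lemma~\ref{lemma.kib}; this reduces the $z$-sum to $q^{-\hi{v}}\sum_{i\ge-\hi{v}+1}q^{-i}\simeq 1$, independently of $v$ and $x$, which is \eqref{Hc}.

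The step I expect to be the main obstacle is the second one: the bookkeeping needed to see that the two series genuinely share the same starting index and that, throughout the summation range, the inequality $m\ge-\hi{v}$ holds so that the $\Gamma$-factors cancel term by term. This is precisely where the hypothesis $z\notin U_v$ (equivalently $\hi{v\wedge z}\le\hi{v}-1$) is used, and getting the indices right is the only delicate point. Once the difference has been reduced to a single clean series, everything after that is geometric-series bounding with constants depending only on $q$ and $\alpha$.
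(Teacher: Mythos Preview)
Your proposal is correct and follows essentially the same route as the paper: reduce to $y=v$ by the triangle inequality, use $x\wedge z=v\wedge z$ and $p^{m+\hi{x}}(x)=p^{m+\hi{v}}(v)$ for $m\ge-\hi{v\wedge z}-1$ to collapse the difference of the two series from Theorem~\ref{rkv2} into one, then bound by geometric series and sum over $z\notin U_v$ grouped by $\hi{v\wedge z}$ using the measure estimate from Lemma~\ref{lemma.kib}. The delicate bookkeeping point you flag is exactly the one the paper spells out as well.
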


The next result is a straightforward consequence of Theorem \ref{thmbound} and Proposition \ref{prophorm}, since $\mathcal K_\alpha$ is clearly self-adjoint.
\begin{cor}
    The horocyclic Bergman projection $P_\alpha$ is of weak type $(1,1)$ and extends to a bounded operator on $L^p_\alpha$ for all $1<p<\infty$. Furthermore, it extends to a bounded operator from $H^1_\alpha$ to $L^1_\alpha$ and from $L^\infty_\alpha$ to $BMO_\alpha$.
\end{cor}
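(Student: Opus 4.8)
The plan is to recognise $P_\alpha$ as an integral operator to which Theorem~\ref{thmbound} applies. First, being the orthogonal projection of $L^2_\alpha$ onto the closed subspace $\B{2}$ (Proposition~\ref{hs}), the operator $P_\alpha$ is bounded on $L^2_\alpha$, and by the very definition of the reproducing kernel together with Theorem~\ref{rkv} it is the integral operator with kernel $\mathcal K_\alpha$ against $\sigma_\alpha$, i.e.\ $P_\alpha f=\sum_{x\in\X}\mathcal K_\alpha(\cdot,x)f(x)\sigma_\alpha(x)$. Second, Proposition~\ref{prophorm} says precisely that $\mathcal K_\alpha$ satisfies the integral H\"ormander condition~\eqref{Hc} for $\sigma_\alpha$. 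Third, by Theorem~\ref{rkv2} the kernel $\mathcal K_\alpha$ is real and symmetric, so the dual kernel $\mathcal K_\alpha^*(x,y)=\overline{\mathcal K_\alpha(y,x)}$ coincides with $\mathcal K_\alpha$ itself; in particular $P_\alpha$ is self-adjoint on $L^2_\alpha$, as one expects of an orthogonal projection, and $\mathcal K_\alpha^*$ trivially satisfies~\eqref{Hc} as well.

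With these three facts in hand the corollary is a direct reading of Theorem~\ref{thmbound}. Part~i) gives that $P_\alpha$ is of weak type $(1,1)$ and bounded on $L^p_\alpha$ for all $1<p<2$; combining this with the $L^2_\alpha$ boundedness and part~iii) (which needs the hypothesis on $\mathcal K_\alpha^*$, now available) yields boundedness of $P_\alpha$ on $L^p_\alpha$ for every $1<p<\infty$. Part~ii) produces the bounded extension $P_\alpha\colon H^1_\alpha\to L^1_\alpha$, and part~iv) the bounded extension $P_\alpha\colon L^\infty_\alpha\to BMO_\alpha$. This accounts for every assertion in the statement, so nothing further is needed.

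The final deduction is essentially bookkeeping; all the real content sits upstream, in the H\"ormander estimate of Proposition~\ref{prophorm} and in the doubling property of $(\X,\rho,\sigma_\alpha)$ from Proposition~\ref{gmd}, which underpins both the Calder\'on--Zygmund decomposition and Theorem~\ref{thmbound}. The only point that deserves a word of care is the meaning of ``extends to a bounded operator'': $P_\alpha$ is defined a priori on $L^2_\alpha$, and since finitely supported functions are dense in $L^p_\alpha$ for $1\le p<\infty$ and in the relevant sense in $H^1_\alpha$, the a priori estimates furnished by Theorem~\ref{thmbound} on this dense class give a unique bounded extension to each of the spaces in question; the $L^\infty_\alpha\to BMO_\alpha$ statement is understood with $P_\alpha$ acting through the kernel. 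I do not anticipate any genuine obstacle here beyond invoking the previously established results in the right order.
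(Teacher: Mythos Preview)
Your proposal is correct and matches the paper's approach exactly: the paper states the corollary as ``a straightforward consequence of Theorem~\ref{thmbound} and Proposition~\ref{prophorm}, since $\mathcal K_\alpha$ is clearly self-adjoint,'' and you have simply spelled this out in full, invoking the $L^2_\alpha$ boundedness of the orthogonal projection, the H\"ormander estimate from Proposition~\ref{prophorm}, and the symmetry of $\mathcal K_\alpha$ from Theorem~\ref{rkv2} to feed all four parts of Theorem~\ref{thmbound}.
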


\subsection*{Acknowledgments}
	
 This work is partially supported by the project \lq\lq Harmonic analysis on continuous and discrete structures\rq\rq, which is funded by Compagnia di San Paolo (Cup E13C21000270007). \\Furthermore, the authors are members of the Gruppo Nazionale per l’Analisi Matematica, la Probabilità e le loro Applicazioni (GNAMPA) of the Istituto Nazionale di Alta Matematica (INdAM).


\begin{thebibliography}{10}

\bibitem{arsw}
N.~Arcozzi, R.~Rochberg, E.~T. Sawyer, and B.~D. Wick.
\newblock Potential theory on trees, graphs and {A}hlfors-regular metric
  spaces.
\newblock {\em Potential Anal.}, 41(2):317--366, 2014.

\bibitem{atv}
L.~Arditti, A.~Tabacco, and M.~Vallarino.
\newblock Hardy spaces on weighted homogeneous trees.
\newblock In {\em Advances in microlocal and time-frequency analysis}, Appl.
  Numer. Harmon. Anal., pages 21--39. Birkh\"{a}user/Springer, Cham, 2020.

\bibitem{atv2}
L.~Arditti, A.~Tabacco, and M.~Vallarino.
\newblock B{MO} spaces on weighted homogeneous trees.
\newblock {\em J. Geom. Anal.}, 31(9):8832--8849, 2021.

\bibitem{b2}
D.~Bekoll\'{e}.
\newblock In\'{e}galit\'{e} \`a poids pour le projecteur de {B}ergman dans la
  boule unit\'{e} de {${\bf C}\sp{n}$}.
\newblock {\em Studia Math.}, 71(3):305--323, 1981/82.

\bibitem{bb}
D.~Bekoll\'{e} and A.~Bonami.
\newblock In\'{e}galit\'{e}s \`a poids pour le noyau de {B}ergman.
\newblock {\em C. R. Acad. Sci. Paris S\'{e}r. A-B}, 286(18):A775--A778, 1978.

\bibitem{b}
J.~Bonyo.
\newblock Reproducing kernels for {H}ardy and {B}ergman spaces of the upper
  half plane.
\newblock {\em Communications in Advanced Mathematical Sciences}, 3(1):13--23,
  2020.

\bibitem{cmm}
A.~Carbonaro, G.~Mauceri, and S.~Meda.
\newblock {$H^1$} and {BMO} for certain locally doubling metric measure spaces.
\newblock {\em Ann. Sc. Norm. Super. Pisa Cl. Sci. (5)}, 8(3):543--582, 2009.

\bibitem{c}
P.~Cartier.
\newblock Harmonic analysis on trees.
\newblock In {\em Harmonic analysis on homogeneous spaces ({P}roc. {S}ympos.
  {P}ure {M}ath., {V}ol. {XXVI}, {W}illiams {C}oll., {W}illiamstown, {M}ass.,
  1972)}, volume Vol. XXVI of {\em Proc. Sympos. Pure Math.}, pages 419--424.
  Amer. Math. Soc., Providence, RI, 1973.

\bibitem{cm}
D.~Celotto and S.~Meda.
\newblock On the analogue of the {F}efferman-{S}tein theorem on graphs with the
  {C}heeger property.
\newblock {\em Ann. Mat. Pura Appl. (4)}, 197(5):1637--1677, 2018.

\bibitem{cc}
J.~M. Cohen and F.~Colonna.
\newblock Embeddings of trees in the hyperbolic disk.
\newblock {\em Complex Variables Theory Appl.}, 24(3-4):311--335, 1994.

\bibitem{ccps}
J.~M. Cohen, F.~Colonna, M.~A. Picardello, and D.~Singman.
\newblock Bergman spaces and {C}arleson measures on homogeneous isotropic
  trees.
\newblock {\em Potential Anal.}, 44(4):745--766, 2016.

\bibitem{ccps2}
J.~M. Cohen, F.~Colonna, M.~A. Picardello, and D.~Singman.
\newblock Fractal functions with no radial limits in {B}ergman spaces on trees.
\newblock {\em Hokkaido Math. J.}, 47(2):269--289, 2018.

\bibitem{cw}
R.~R. Coifman and G.~Weiss.
\newblock Extensions of {H}ardy spaces and their use in analysis.
\newblock {\em Bull. Amer. Math. Soc.}, 83(4):569--645, 1977.

\bibitem{cms}
M.~Cowling, S.~Meda, and A.~G. Setti.
\newblock An overview of harmonic analysis on the group of isometries of a
  homogeneous tree.
\newblock {\em Exposition. Math.}, 16(5):385--423, 1998.

\bibitem{cms2}
M.~Cowling, S.~Meda, and A.~G. Setti.
\newblock Estimates for functions of the {L}aplace operator on homogeneous
  trees.
\newblock {\em Trans. Amer. Math. Soc.}, 352(9):4271--4293, 2000.

\bibitem{ftn}
A.~Fig\`a-Talamanca and C.~Nebbia.
\newblock {\em Harmonic analysis and representation theory for groups acting on
  homogeneous trees}, volume 162 of {\em London Mathematical Society Lecture
  Note Series}.
\newblock Cambridge University Press, Cambridge, 1991.

\bibitem{fr}
F.~Forelli and W.~Rudin.
\newblock Projections on spaces of holomorphic functions in balls.
\newblock {\em Indiana Univ. Math. J.}, 24:593--602, 1974/75.

\bibitem{g}
M.~Gromov.
\newblock {\em Metric structures for {R}iemannian and non-{R}iemannian spaces}.
\newblock Modern Birkh\"{a}user Classics. Birkh\"{a}user Boston, Inc., Boston,
  MA, 2007.

\bibitem{hs}
W.~Hebisch and T.~Steger.
\newblock Multipliers and singular integrals on exponential growth groups.
\newblock {\em Math. Z.}, 245(1):37--61, 2003.

\bibitem{h}
L.~H\"{o}rmander.
\newblock Estimates for translation invariant operators in {$L\sp{p}$} spaces.
\newblock {\em Acta Math.}, 104:93--140, 1960.

\bibitem{lstv}
M.~Levi, F.~Santagati, A.~Tabacco, and M.~Vallarino.
\newblock Analysis on trees with nondoubling flow measures.
\newblock {\em Potential Anal.}, 58(4):731--759, 2023.

\bibitem{dmmv}
F.~D. Mari, M.~Monti, and M.~Vallarino.
\newblock Harmonic {B}ergman projectors on homogeneous trees.
\newblock {\em {\rm To appear on} Potential Anal.}, 2023.

\bibitem{m}
M.~Monti.
\newblock {$H^1$} and {BMO} spaces for exponentially decreasing measures on
  homogeneous trees.
\newblock {\rm To appear in} Springer INdAM Ser. Springer, Cham, 2023.

\bibitem{s2}
E.~M. Stein.
\newblock Singular integrals and estimates for the {C}auchy-{R}iemann
  equations.
\newblock {\em Bull. Amer. Math. Soc.}, 79:440--445, 1973.

\bibitem{s}
E.~M. Stein.
\newblock {\em Harmonic analysis: real-variable methods, orthogonality, and
  oscillatory integrals}, volume~43 of {\em Princeton Mathematical Series}.
\newblock Princeton University Press, Princeton, NJ, 1993.

\bibitem{z}
K.~Zhu.
\newblock {\em Operator theory in function spaces}, volume 138 of {\em
  Mathematical Surveys and Monographs}.
\newblock American Mathematical Society, Providence, RI, second edition, 2007.

\end{thebibliography}
\end{document}